\theoremstyle{plain}
\theoremstyle{definition}
\newtheorem {theorem}{Theorem}[section]
\newtheorem {lemma}[theorem]{Lemma}
\newtheorem {corollary} [theorem]{Corollary}
\newtheorem{definition}[theorem]{Definition}
\newtheorem{remark}[theorem]{Remark}
\theoremstyle{remark}
\long\def\MSC#1\EndMSC{\def\arg{#1}\ifx\arg\empty\relax\else
     {\par\narrower\noindent
     {\small\it 2010 Mathematics Subject Classification.} \small #1\par}\fi}
\long\def\KEY#1\EndKEY{\def\arg{#1}\ifx\arg\empty\relax\else
     {\par\narrower\noindent
     {\small\it Keywords and Phrases.} \small #1\par}\fi}
\newcommand{\R}{\mathbb R}
\newcommand{\N}{\mathbb N} 
\newcommand{\Tan}{\text{Tan}}
\newcommand{\Exc}{\text{Exc}}
\newcommand{\g}{\mathfrak{g}}
\newcommand{\obar}[1]{\overline{#1}}
\newcommand{\opi}{\obar{\pi}}
\DeclareMathOperator{\ad}{ad}
\DeclareMathOperator{\Ad}{Ad}
\newcommand{\abs}[1]{\left|#1\right|}
\newcommand{\barint}
{\rule[.036in]{.12in}{.009in}\kern-.16in \displaystyle\int}
\newcommand\restr[2]{{
  \left.\kern-\nulldelimiterspace
  #1
  \vphantom{\big|}
  \right|_{#2}
  }}
\newcommand{\ang}[1]{\left\langle#1\right\rangle}
\DeclareMathOperator{\dimens}{dim}
\newcommand{\ubar}[1]{\underline{#1}}
\newcommand{\set}[1]{\left\{#1\right\}}
\newcommand{\pa}[1]{\left(#1\right)}
\newcommand{\bra}[1]{\left[#1\right]}
\newcommand{\norm}[1]{\left\|#1\right\|}
\newcommand{\leb}{\mathcal{L}}
\newcommand{\G}{G}
\newcommand{\dgamma}{\dot{\gamma}}
\newcommand{\ugamma}{\underline{\gamma}}
\newcommand{\incr}[1]{\left. #1 \right|}
\DeclareMathOperator{\exc}{Exc}
\DeclareMathOperator{\dev}{Cor}
\DeclareMathOperator{\cut}{Cut}
\newcommand{\media}{\barint}
\newcommand{\duebeta}{\beta}
\renewcommand{\epsilon} {\varepsilon}
\renewcommand{\phi} {\varphi}
\renewcommand{\bar}{\overline}
\subjclass[2010]{53C17, 49K30, 28A75}
\keywords{Geodesics, regularity of length minimizers, Carnot--Carathéodory spaces, sub-Riemannian geometry, Carnot groups}
\begin{document}


\author[R.~Monti]{Roberto Monti}
\email{monti@math.unipd.it}

\author[A.~Pigati]{Alessandro Pigati}
\email{alessandro.pigati@math.ethz.ch}

\author[D.~Vittone]{Davide Vittone}
\email{vittone@math.unipd.it}

\address[Monti and Vittone]
{Universit\`a di Padova, Dipartimento di Matematica,
via Trieste 63, 35121 Padova, Italy}

\address[Pigati]
{Scuola Normale Superiore, Piazza dei Cavalieri 7, 56126 Pisa, Italy}

\address[Pigati]{ETH Z\"urich, Department of Mathematics,
R\"amistrasse 101, 8092 Z\"urich, Switzerland}

\thanks{R.~M. and D.~V. are supported by MIUR (Italy) and University of Padova. D.~V. is supported by University of Padova Project Networking and INdAM-GNAMPA Project 2017 ``Campi vettoriali, superfici e perimetri in geometrie singolari''.}

\title[Tangent lines to geodesics]{Existence of tangent lines to Carnot--Carath\'eodory geodesics}

\begin{abstract}
We show that length minimizing curves in Carnot--Carath\'eodory
spaces possess at any point at least one tangent curve (i.e., a blow-up in the
nilpotent approximation)  equal to a straight horizontal line. This is the
first regularity result for length minimizers that holds with no assumption
on either the space (e.g., its rank, step, or analyticity) or the curve, and it is novel even in the  setting of Carnot groups.
\end{abstract}

\maketitle



\section{Introduction}
Let $M$ be a connected $n$-dimensional $C^\infty$-smooth manifold and  $\mathcal X=\{ X_1,\ldots,X_r\}$, $r\geq 2$,  a system of linearly independent $C^\infty$-smooth vector fields on $M$ satisfying the H\"ormander  condition.
We call the pair $(M,\mathcal X)$ a \emph{Carnot--Carath\'eodory (CC) structure}.
 Given an interval   $I\subseteq\R$, a Lipschitz curve $\gamma:I\to M$ is
said to be \emph{horizontal}
if there exist  functions $h_1,\ldots, h_r \in L^\infty(I)$ such that for
a.e. $t\in I$ we have
\begin{equation}\label{horiz}
	\dot \gamma (t) = \sum_{i=1}^r h_i(t) X_i (\gamma(t)).
\end{equation}
Letting $|h|:=(h_1^2+\ldots+h_r^2)^{1/2}$, the length of $\gamma$ is then
defined as
\[
	L(\gamma):=\int_I |h(t)|\,dt.
\] 
We will usually  
assume that curves are parameterized by
arclength, i.e., $|h(t)|=1$ for a.e.~$t$, so that $\mathcal L^1(I) = L(\gamma)$.

Since $M$ is connected, by Chow--Rashevsky theorem for any pair of
points $x,y \in M$ there exists a horizontal curve
joining $x$ to $y$. We can therefore define a distance function $d: M\times
M\to[0,\infty)$ letting
\begin{equation}\label{diddi}
	d(x,y) := \inf \big\{ L(\gamma) \mid \gamma: [0,T]\to M \textrm{ horizontal with
	$\gamma(0) = x$ and $\gamma(T)=y$}\big\}.
\end{equation}
The resulting metric space $(M,d)$ is a \emph{Carnot--Carath\'eodory  space}.
Typical examples of Carnot--Carath\'eodory spaces are given by sub-Riemannian manifolds
$(M, \mathcal D, g)$, where
$\mathcal D \subset TM$ is a completely non-integrable distribution and $g$ is a
smooth metric on $\mathcal D$.

If the closure of any ball in $(M,d)$ is compact, then
the infimum in \eqref{diddi} is a minimum, i.e., any pair of points can be
connected by a length-minimizing curve.
A horizontal curve $\gamma:[0,T]\to M$ is a \emph{length minimizer} if $L(\gamma) =
d(\gamma(0),\gamma(T))$. In Carnot--Carath\'eodory spaces (or even  in the model
case of {\em Carnot groups})
it is not known whether constant-speed length minimizers are 
$C^\infty$-smooth, 
or even $C^1$-smooth.
The main obstacle is the presence of \emph{abnormal} 
length minimizers, which are not captured by the natural Hamiltonian framework,
see e.g.~\cite[Section~1.5]{montgomery}. In \cite{Mon94}, Montgomery gave the first
example of such a length minimizer. Contrary to the Riemannian case,
stationarity conditions do not guarantee any smoothness of the curve: in
\cite{LLMV13} it is proved that no further regularity beyond the Lipschitz one
can be obtained for abnormal extremals from the Pontryagin Maximum Principle and
  the Goh condition (which is a second-order necessary condition, see e.g. \cite[Chapter~20]{agrsach}). 

However, some partial regularity results are known. If the {\em step} is at most
$2$ (i.e., for any $x$ the tangent space  $T_xM$ is spanned by the $r+\binom{r}{2}$ vectors
$X_i(x)$, $[X_i,X_j](x)$), then all constant-speed length minimizers are smooth.
In the context of  Carnot groups, the regularity problem was recently 
solved also when the step is at most $3$ by Le Donne, Leonardi, Monti and Vittone
in \cite{LDLMV1}. In \cite{Sus14}  Sussmann proved that, in presence
of analytic data (and in particular in Carnot groups), all length minimizers are
\emph{analytic} on a dense open set of times, although it is not known whether
this set has full measure. Building on ideas contained in \cite{LM08,LLMVESAIM},
Hakavuori and Le Donne recently proved in \cite{HL16} that length minimizers
cannot have corner-type singularities. Other partial regularity results are contained in \cite{mo}. We also refer to \cite{AOP,montiregprob,R,Vit14} for surveys about the known results on the problem.

It is well-known that, at any point  $x\in
M$, the space $(M,\mathcal X)$ has a \emph{nilpotent approximation}
$(M^\infty,\mathcal X^\infty)$, which is itself a
Carnot--Carath\'eodory structure.
The corresponding metric space is obtained as a
pointed Gromov--Hausdorff limit of metric spaces. An elementary construction of
$(M^\infty,\mathcal X^\infty)$ is detailed in \cite{MPVconotangente}.

When $t\in (-T,T)$ is fixed and we perform
this construction for $x=\gamma(t)$, we denote by $\Tan(\gamma;t)$ the set
containing all possible  curves in $M^\infty$ that arise as limits of  $\gamma$ 
in the local uniform topology. The {\em tangent cone} $\Tan(\gamma;t)$ was
introduced in \cite{MPVconotangente}, where it was also proved that its elements
are length minimizing horizontal curves in $M^\infty$   parametrized by
arclength. We call {\em horizontal line} any horizontal curve in
$(M^\infty,\mathcal X^\infty)$ passing through the base point in $ M ^\infty$
and  with \emph {constant}   controls.

The following theorem is the main result of the paper.

\begin{theorem}  \label{1.1}
	Let $\gamma:[-T,T]\to M$ be a  length minimizer parametrized by arclength in a Carnot--Carath\'eodory
	space $(M,d)$. Then, for any $t\in(-T,T)$, the tangent cone $\Tan(\gamma;t)$ contains a horizontal
	line.
\end{theorem}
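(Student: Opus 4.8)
The plan is to argue by contradiction: suppose that at some point, say $t=0$ after translation, no element of $\Tan(\gamma;0)$ is a horizontal line. We already know from \cite{MPVconotangente} that every element of $\Tan(\gamma;0)$ is a length minimizer in the nilpotent approximation $M^\infty$ parametrized by arclength and passing through the base point. The strategy is to extract quantitative consequences of the \emph{absence} of a straight tangent and show that they are incompatible with length-minimality at the scale where the blow-up is performed. Concretely, I would first show that the negation forces a uniform ``curving'' estimate: there exist $\varepsilon>0$ and $\rho>0$ such that for every sufficiently small scale $r$, the dilated curve $\gamma_r$ (the rescaling of $\gamma$ at the origin by factor $r$ in the CC distance) stays, on the unit time interval, at distance at least $\varepsilon$ from every horizontal line through the base point. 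This uses a compactness argument on the space of blow-ups together with the hypothesis that no horizontal line lies in $\Tan(\gamma;0)$; if this failed, one could find a sequence of scales along which $\gamma_r$ converges locally uniformly to a horizontal line, producing the forbidden tangent.

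The heart of the proof is then a \emph{cut-and-compare} (or ``shortcut'') construction in the nilpotent approximation, in the spirit of \cite{LM08,LLMVESAIM,HL16}. Having fixed a blow-up limit $\gamma_\infty\in\Tan(\gamma;0)$ which by assumption is not a line, I would exploit the algebraic structure of $M^\infty$: since $\gamma_\infty$ is horizontal and non-constant in its controls, there is a first layer of the stratification in which its ``averaged'' direction is well-defined, and one can split $\gamma_\infty|_{[-1,1]}$ as a concatenation and replace the middle portion by a curve that moves along a single horizontal line segment realizing the net horizontal displacement, correcting the higher-order (step $\ge 2$) coordinates by an appropriately small loop whose length, by the homogeneity of the dilations, is negligible compared to the length saved. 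The key point is that a genuinely non-straight horizontal minimizer in $M^\infty$, when one compares its length on a subinterval with the CC distance between its endpoints, must exhibit a strict deficit at \emph{some} scale — the triangle-type inequality is strict unless the curve is a line — and this deficit survives the blow-up procedure because everything is scale-invariant in $M^\infty$. Transplanting this shortcut back to the original curve $\gamma$ at a small but fixed scale $r$, using that $(M,d)$ is, up to errors $o(1)$ as $r\to 0$, isometric to $M^\infty$ at scale $r$ (the Gromov--Hausdorff convergence from \cite{MPVconotangente}), yields a competitor joining $\gamma(-r\tau)$ to $\gamma(r\tau)$ strictly shorter than $\gamma|_{[-r\tau,r\tau]}$, contradicting $L(\gamma)=d(\gamma(-T),\gamma(T))$ via the minimality of subarcs.

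I expect the main obstacle to be making the shortcut construction \emph{uniform} and \emph{quantitative} without any hypothesis on step, rank, or analyticity: one must produce a length gain of a definite size $\delta(\varepsilon)>0$ (depending only on the distance-$\varepsilon$ from lines, not on the curve or the space) while the error incurred in transplanting the construction from $M^\infty$ back to $M$ at scale $r$ is controlled and tends to zero as $r\to 0$. This forces the argument to be run entirely inside the nilpotent approximation, where dilations act, and to track carefully how the correction loop in the higher layers scales — here the homogeneity of the Carnot--Carathéodory metric on $M^\infty$ is essential, since a loop closing up a displacement of size $h$ in layer $j$ has length of order $h^{1/j}$, which is the mechanism that makes the correction cheap at small scales. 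A secondary technical point is that $\Tan(\gamma;0)$ may contain many different limit curves along different sequences of scales, so the contradiction must be derived from a \emph{single} well-chosen scale rather than from a convergent subsequence; the compactness/uniformity step described above is what allows this, by guaranteeing that the non-straightness is visible at \emph{every} small scale simultaneously.
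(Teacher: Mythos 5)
There is a genuine gap, and it sits at the very center of your argument. You derive the contradiction from the claim that ``the triangle-type inequality is strict unless the curve is a line,'' applied to a blow-up $\gamma_\infty\in\Tan(\gamma;0)$ in order to find a subinterval on which $\gamma_\infty$ is strictly longer than the distance between its endpoints. But, as you yourself note, every element of $\Tan(\gamma;0)$ is a length \emph{minimizer} in $M^\infty$, and subarcs of minimizers are minimizers, so $L(\gamma_\infty|_{[a,b]})=d(\gamma_\infty(a),\gamma_\infty(b))$ on \emph{every} subinterval. Non-straight length minimizers exist in Carnot groups (e.g.\ the non-horizontal-line geodesics of the Heisenberg group), so non-straightness of a tangent produces no length deficit and no contradiction; if it did, the regularity problem would be trivial. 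Relatedly, your estimate that the correcting loop is ``negligible compared to the length saved by homogeneity'' is not automatic: the vertical displacement created by straightening the horizontal projection on an interval of length $\eta$ lives in layer $j$ with size $O(\eta^j)$, so a naive closing loop costs $O((\eta^j)^{1/j})=O(\eta)$ --- the \emph{same} order as the maximal possible gain. Making the correction genuinely cheaper than the gain is the hard part, and it is exactly what the paper's correction devices (conjugated commutators placed on intervals where the increments of $\ugamma$ are quantitatively linearly independent, Lemma \ref{excrinc}) together with the exponent bookkeeping \eqref{bel} achieve.

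The paper's actual route negates a different, quantitative statement: not ``no tangent is a line,'' but ``the excess $\Exc(\gamma;[-\eta,\eta])$ stays bounded below by $\epsilon$ at all small scales'' (Theorem \ref{1.2}). Under that hypothesis the cut of the horizontal projection gains length $\gtrsim\eta\,\epsilon^2$ (Lemma \ref{excgain}), and the endpoint is restored layer by layer at total cost $o(\eta^{1+\beta})$, contradicting minimality. Note also that even once $\Exc\to 0$ along a sequence of scales, the resulting tangent only has derivative confined to a \emph{hyperplane} of $\g_1$; this is a horizontal line only when $r=2$, and the general case requires an induction on the rank using the fact that tangents of tangents are tangents. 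Your proposal omits both the correct quantity to contradict (the excess, rather than uniform distance from lines) and this final rank-reduction step.
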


Theorem \ref{1.1} has an analytic reformulation,  stated solely in terms of the
control $h$, which is independent of the notion of nilpotent
approximation: see Remark \ref{contreform}.  A version of Theorem
\ref{1.1} holds for the extremal 
points $t=0$ and $t=T$ of a length minimizer  $\gamma:[0,T]\to M$. In this 
case, the tangent cone contains a horizontal half-line; see Theorem
\ref{thm:1.1onesided}.
These results imply and improve  the ones
contained in \cite{LM08,LLMVESAIM,HL16}: while in these papers the existence of
(linearly independent) left and right derivatives is assumed
in order to construct a shorter competitor, Theorem \ref{1.1} provides a mild
form of pointwise
differentiability which
automatically rules out corner-type singularities.

Theorem \ref{1.1} is deduced from a similar result for the case when $M=G$ is
a Carnot group of rank $r\geq 2$
and $\mathcal X = \{ X_1,\ldots, X_r\}$ is a system of left-invariant vector
fields forming a basis of the first layer of its Lie algebra $\g$. As explained in the proof of Theorem \ref{1.1}, the reduction to this case is made possible by the results proved in \cite{MPVconotangente}.

The proof in the case of a Carnot group, in turn, is a
consequence of Theorem \ref{1.2} below.
Let $ \mathfrak g= \mathfrak g_1\oplus \cdots \oplus \mathfrak g_{s}$ be the
stratification of $\mathfrak g$ 
and let $\langle \cdot,\cdot\rangle$ be the scalar product on $\mathfrak g_1$
making $X_1 ,\ldots,X_r $ orthonormal. 
The integer $s\geq 2$ is the step of the group and $r=\mathrm{dim}\: \g_1$ its rank. We denote by $S^{r-1} =\{ v\in
\mathfrak g_1:\langle v,v\rangle =1\}$ the unit sphere in $\mathfrak g_1$.
We define the \emph{excess} of a horizontal curve $\gamma:[-T,T]\to G$ over a Borel
set $B\subseteq [-T,T]$ with positive measure as
\[
	\Exc (\gamma;B) := \inf_{v\in S^{r-1}}\Big( \barint _B \langle
	v,\dot\gamma(t)\rangle ^2\,dt\Big)^{1/2}.
\]
The excess $\Exc (\gamma;B)$ measures how far $\dgamma_{|B}$ is from being contained in a single hyperplane of $\g_1$, see Remark \ref{rem:excpiani}. For length-minimizing curves, the excess is infinitesimal at suitably small
scales, as stated in our second main result.

\begin{theorem}\label{1.2}
	Let $G$ be a Carnot group and let $\gamma:[-T,T]\to G$, $T>0$, be a
	length-minimizing curve parametrized by arclength.
	Then there exists an infinitesimal sequence $\eta_i\downarrow 0$ 
	  such that
	\begin{equation} \label{ecc}
		\lim_{i\to\infty} \Exc(\gamma;[-\eta_i,\eta_i]) = 0.
	\end{equation}
\end{theorem}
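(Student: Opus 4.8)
The plan is to argue by contradiction: if the excess stays bounded below along all small scales, then $\dot\gamma$ concentrates near a hyperplane of $\g_1$, and we manufacture a strictly shorter competitor, contradicting length-minimality. Here are the steps I would carry out.

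First I would set up the contradiction hypothesis quantitatively. Suppose there exist $\epsilon_0 > 0$ and $\eta_0 > 0$ such that $\Exc(\gamma;[-\eta,\eta]) \geq \epsilon_0$ for all $0 < \eta \leq \eta_0$. I would then blow up: for a sequence $\eta_i \downarrow 0$, consider the rescaled curves $\gamma_i := \delta_{1/\eta_i} \circ \gamma|_{[-\eta_i,\eta_i]}$ (suitably recentered at $\gamma(0)$), which are length minimizers in $G$ parametrized by arclength on $[-1,1]$. By the results of \cite{MPVconotangente} invoked in the excerpt, a subsequence converges locally uniformly to some $\gamma_\infty \in \Tan(\gamma;0)$, a length-minimizing horizontal line's candidate — but crucially the excess is \emph{scale-invariant under dilations} (the controls $h$ are unchanged by the anisotropic dilation, only reparametrized), so $\Exc(\gamma_i;[-1,1]) = \Exc(\gamma;[-\eta_i,\eta_i]) \geq \epsilon_0$. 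One must check that the excess passes to the limit — here $\dot\gamma_i$ converges only weakly-$*$ in $L^\infty$, and since $v \mapsto \barint_B \langle v, \dot\gamma\rangle^2$ is an average of squares it is weakly lower semicontinuous in $\dot\gamma$ for fixed $v$, but we take an infimum over $v$, so some care (e.g.\ extracting the optimal $v_i \to v_\infty$ by compactness of $S^{r-1}$) gives $\Exc(\gamma_\infty;[-1,1]) \geq \epsilon_0$ as well. Thus it suffices to derive a contradiction for a length minimizer $\gamma_\infty$ on $[-1,1]$ in $G$ with excess bounded below — i.e.\ a length minimizer whose horizontal velocity genuinely ``spreads'' in all directions of $\g_1$.

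The heart of the matter is then the construction of a competitor for such a spread-out minimizer. The idea: if $\dot\gamma_\infty$ is bounded away from every hyperplane in an $L^2$-averaged sense, then the curve oscillates substantially in every horizontal direction, and we can ``straighten'' it — replace a piece of $\gamma_\infty$ by a curve with the same endpoints but whose control is closer to constant, gaining length. Concretely, I would compare $\gamma_\infty$ on subintervals to geodesics joining their endpoints, using that on a Carnot group endpoints at CC-distance $\rho$ can be joined by a curve of length $\rho$, and estimate the defect $L(\gamma_\infty|_{[a,b]}) - d(\gamma_\infty(a),\gamma_\infty(b))$ from below in terms of the excess. The point is that a unit-speed curve with excess $\geq \epsilon_0$ over $[a,b]$ cannot be a length minimizer if $b-a$ is large enough relative to the group's structure constants, because its horizontal development covers too much of $\g_1$ and the CC-distance between its endpoints is strictly less than $b-a$: the excess forces the ``straight-line direction'' contribution $\barint \langle v, \dot\gamma_\infty\rangle\, dt$ to be strictly less than $1$ for every $v$, which (after controlling the higher-layer error terms via the ball-box / scaling estimates) yields $d(\gamma_\infty(-1),\gamma_\infty(1)) \leq (1-c\epsilon_0^2)\cdot 2 < 2 = L(\gamma_\infty)$.

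The main obstacle, and where I expect the real work to lie, is controlling the higher-layer (vertical) components when building the competitor. On a Carnot group of step $s \geq 2$, two curves with the same horizontal ``average direction'' need not have the same endpoint: the vertical drift depends on the full path, not just its projection to $\g_1$. So gaining horizontal length by straightening may be paid back by having to correct the endpoint in the higher layers, and that correction costs length $\gtrsim (\text{vertical error})^{1/s}$ — a much larger penalty than the horizontal gain if one is not careful about scales. The delicate balance is to straighten \emph{only slightly} and on a well-chosen scale so that the horizontal gain (of order $\epsilon_0^2$ times the length) dominates the vertical-correction cost; this is presumably why the theorem only asserts excess decay along \emph{some} sequence of scales rather than all scales, and why the quantitative interplay between the excess, the step $s$, and the interval length must be tracked explicitly. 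I would model this part on the competitor constructions in \cite{HL16, LM08, LLMVESAIM}, but replacing their hypothesis of existence of one-sided derivatives with the $L^2$-excess lower bound, which is exactly the weakening the excerpt advertises.
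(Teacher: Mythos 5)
Your setup (contradiction hypothesis, scale invariance of the excess, compactness of minimizers) matches the paper's, but the core of your argument contains a claim that is false and, more importantly, you never supply the mechanism that actually makes the proof work. The assertion that a unit-speed length minimizer with $\Exc(\gamma_\infty;[-1,1])\ge\epsilon_0$ must satisfy $d(\gamma_\infty(-1),\gamma_\infty(1))\le(1-c\epsilon_0^2)\cdot 2<L(\gamma_\infty)$ cannot be true: non-straight length minimizers exist in every Carnot group of step $\ge 2$, they have positive excess, and by Remark \ref{rem:exctrdil} both minimality and excess are preserved under the anisotropic rescaling $\delta_\lambda(\gamma(\cdot/\lambda))$, so there are minimizers on arbitrarily long intervals with excess bounded below by a fixed $\epsilon_0$. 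Consequently your reduction ``it suffices to contradict a single minimizer on $[-1,1]$ with excess $\ge\epsilon_0$'' discards exactly the information that the paper needs, namely that the excess is bounded below \emph{simultaneously at all small symmetric scales}. The paper exploits this multi-scale hypothesis: the cut is performed on the smallest interval $[-\eta,\eta]$ (gaining length $\ge\tfrac{\eta}{2}\epsilon^2$ by Lemma \ref{excgain}), while the endpoint is restored layer by layer using corrections placed on a nested family of \emph{larger} intervals $[-\eta^{\rho_k},\eta^{\rho_k}]$ with exponents $\rho_s<\dots<\rho_1=1$ tuned so that each correction costs $o(\eta^{1+\beta})$.

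The missing idea is precisely how to correct the vertical displacement cheaply. You correctly identify that a direct geodesic correction of a $\mathfrak g_{j}$-error $E$ costs $\approx|E|^{1/j}$ and is too expensive, but ``straighten only slightly on a well-chosen scale'' is not a substitute for a construction. What the paper does is use the excess lower bound a \emph{second} time: Lemma \ref{excrinc} (proved by compactness of minimizers) shows that excess $\ge\epsilon$ on an interval $I$ yields $r$ increments $\ugamma(b_i)-\ugamma(a_i)$ that are linearly independent with a quantitative determinant bound $c(\leb^1(I))^r$. These increments feed the correction devices of Definition \ref{def:cor}, whose displacement is computed exactly as a commutator $[Y,\ugamma(s')-\ugamma(s)]$ (Lemma \ref{lem:corstratoj}, Corollary \ref{mdeveffectuse}); a $\mathfrak g_{k+1}$-error of size $O(\eta^{(k+1)\rho_k})$ is then cancelled by elements $Z_j\in\mathfrak g_k$ of size $O(\eta^{(k+1)\rho_k-\rho_{k+1}})$, at length cost $O(\eta^{((k+1)\rho_k-\rho_{k+1})/k})=o(\eta^{1+\beta})$ by the choice \eqref{bel}. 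None of this appears in your proposal. A secondary gap: your argument for passing the excess to the blow-up limit does not close, since weak lower semicontinuity of $v\mapsto\media_B\ang{v,\dot{\underline\gamma}}^2$ gives inequalities in the wrong direction; the paper instead upgrades weak to strong $L^2$ convergence of the controls using minimality (Lemma \ref{cptmingrp}), after which Remark \ref{exciscont} applies.
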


Again, this result has a version for extremal points: for a length minimizer $\gamma:[0,T]\to G$ the excess
$\Exc(\gamma;[0,\eta_i])$ is infinitesimal, see Theorem \ref{onesthm}.
When $r=2$, \eqref{ecc} implies that there exists $\kappa \in
\Tan(\gamma;0)$ of the form
$\kappa(t) = \exp(tv)$ for some $v \in \mathfrak g_1$. This proves Theorem
\ref{1.1} for $M=G$ with $r=2$.
When $r>2$, the situation can be reduced by induction
to the case $r=2$ using, again, the outcomes of  \cite{MPVconotangente}.  

The introduction of the excess is probably among the main contributions of this
paper; the reader familiar with the regularity theory of minimal hypersurfaces
in $\R^n$ will notice the analogy with the quantity that plays a key role  
in De Giorgi's approach to that problem, see e.g.~\cite{giusti}, and in many
subsequent results inspired by his work (we just mention e.g. \cite{allard} and \cite{evans}). In this
sense, Theorems \ref{1.1} and \ref{1.2} constitute a first step towards a
regularity theory for length minimizers, whose next stages (height bounds,
Lipschitz approximation theorems, reverse Poincar\'e inequality, harmonic
approximation according to the terminology of \cite{maggi}) could now see their
way paved by Theorems \ref{1.1} and \ref{1.2}.

We conclude this introduction by spending a few words about the proof of Theorem \ref{1.2}. As  detailed in Section \ref{sec:proofs}, it goes by contradiction 
and uses a cut-and-adjust construction performed in $s$ steps. If we
had $\Exc(\gamma;[-\eta,\eta])\geq \varepsilon$ for some
$\varepsilon >0$ and for all small $\eta>0$, then we could find $t_1<\cdots<t_r$
such that, roughly speaking,
the vectors $\dot\gamma(t_1),\dots,\dot\gamma(t_r)\in\mathfrak g_1$ are
linearly independent in a quantitative way, see Lemma \ref{excrinc}. 
We could replace the ``horizontal projection'' $ \underline{\gamma}$ of $\gamma$
on
the interval $[-\eta,\eta]$ with the line segment joining
$\underline{\gamma}(-\eta)$
to $\underline{\gamma}(\eta)$, whose gain of length would be estimated from below in terms
of the
excess, see Lemma \ref{excgain}, and 
we could lift the resulting ``horizontal coordinates'' to a horizontal
curve in $G$.
The  end-point of the new curve might be different,
but the vectors $\dot\gamma(t_1),\dots,\dot\gamma(t_r)$ could then be used to build
suitable correction devices
restoring the end-point, taking care to keep a positive gain of length. This construction is detailed in Sections
\ref{sec:exc}, \ref{sec:cutcor} and \ref{sec:proofs} and  it is a refinement of the techniques introduced and developed
in \cite{LM08} and \cite{HL16}. In particular, Section \ref{sec:cutcor} 
contains
explicit formulas, for the length gain associated with the cut and for the
displacement of the final point caused by the application of the correction
devices, which make the constructions in \cite{LM08,HL16} more transparent. We
believe that these formulas have an independent interest and could possibly be
useful for future applications.

{\em Acknowledgements.} We thank L. Ambrosio for several discussions and for being an invaluable mentor and friend.

\section{Excess, compactness of length minimizers and first
consequences}\label{sec:exc}

In this section we prove Lemma \ref{excrinc}, which provides the correct position
for the correction devices introduced in Section \ref{sec:cutcor}. We work in
the setting of a Carnot group.

\begin{definition}
A {\em Carnot group} is a finite dimensional connected,  simply connected and
nilpotent Lie group $G$ whose Lie algebra $\g$ is {\em stratified}, i.e., there
exists a (fixed) decomposition $ \g= \g_1\oplus\cdots\oplus \g_s $ such that
$\g_j=[\g_1,\g_{j-1}]$ for any $j=2,\dots, s$ and $[\g,\g_s]=\{0\}$.
\end{definition}

We will denote by $n$ the dimension of $\g$ and by $r$ the dimension of its first layer $\g_1$; we  refer to the integers $r,s$ as the {\em rank} and {\em step} of $\g$, respectively. We  endow $ \g$ with a positive definite scalar
product $\ang{\cdot,\cdot}$ such that $\g_i\perp \g_j$ whenever $i\neq j$. We also let $|\cdot| :=\langle\cdot,\cdot\rangle^{1/2}$. We fix an orthonormal basis
$X_1,\dots,X_n$ of $\g$ adapted to the stratification, i.e.,  such that  
$\g_j=\mathrm{span}\{ X_{r_{j-1}+1},\dots,X_{r_j}\}$ for any $j=1,\dots,s$,
where  $r_j:=\mathrm{dim} (\g_1)+\dots+\mathrm{dim}(\g_j)$ and $r_0:=0$. 

For $\lambda>0$, the {\em dilations} $\delta_r:\g\to\g$ defined by
\[
\delta_\lambda (X):=\lambda^j X,\qquad\text{if }X\in \g_j,
\]
form a one-parameter group of isomorphisms of $\g$. Being $\g$ nilpotent, the exponential map $\exp:\g\to G$ is a diffeomorphism and, by composition with $\exp$, the dilations on $\g$ induce a one-parameter family of group isomorphisms, which we still denote by $\delta_\lambda:G\to G$. We recall for future reference the Baker--Campbell--Hausdorff formula: for
any $X,Y\in\g$ we have $\exp(X) \exp(Y)=\exp(P(X,Y))$, where
\begin{equation}\label{eq:bchpol}
P(X,Y):=\sum_{p=1}^s\frac{(-1)^{p+1}}{p}\sum_{1\le k_i+\ell_i\le s}\frac{[X^{k_1},Y^{\ell_1},\dots,X^{k_p},Y^{\ell_p}]}{k_1!\cdots k_p!\ell_1!\cdots\ell_p!\sum_i(k_i+\ell_i)}.
\end{equation}
Here, we use the short notation $[Z_1,\dots,Z_{k+1}]:=(\ad Z_1)\cdots(\ad
Z_k)Z_{k+1}$, with $\ad X:\g\to\g$ being the adjoint mapping $\ad X(Y) :=
[X,Y]$.

The group $G$ is endowed with the Carnot--Carath\'eodory distance $d$  induced
by the family $X_1,\dots, X_r$, for which one clearly has, for
$x,y,z\in G$ and $\lambda>0$,
\begin{equation}\label{simmetriedist}
d(zx,zy)=d(x,y)\quad\text{and}\quad
d(\delta_\lambda(x),\delta_\lambda(y))=\lambda d(x,y)
.
\end{equation}
We will frequently use the homogeneous (pseudo-)norm $\|x\|$ defined in this way: if  $x=\exp(Y_1+\dots+Y_s)$ for $Y_j\in\g_j$, then
\[
\| x\| := \sum_{j=1}^s |Y_j|^{1/j}.
\]
A well-known consequence of the homogeneity of $\|\cdot\|$ and \eqref{simmetriedist} is the fact that $\|\cdot\|$ is equivalent to the distance function from the identity 0 of $G$. In particular there exists $C>0$ such that, for any horizontal curve $\gamma:[-T,T]\to G$ parametrized by arclength and such that $\gamma(0)=0$, the estimate
\begin{equation}\label{rem:potenzecoordinatecurve}
\|\gamma(t)\|\:\leq C |t|,\qquad t\in[-T,T],
\end{equation}
holds.

We introduce one of the main objects of this paper, the {\em excess} of a
horizontal curve. Let us denote by 
$\opi: \mathfrak g\to\mathfrak g_1$ the
projection onto the first layer and  by $\pi : G\to\mathfrak g_1$   the map
$\pi := \opi \circ \exp ^{-1}$.
For any
curve $\gamma$ in $\G$ we use the short notation $\ugamma:=\pi\circ\gamma$. We  also identify $\g_1$ with $\R^r$ through the fixed orthonormal basis
$X_1,\dots,X_r$ and   denote by $S^{r-1}$ and $G(r-1)$
the set of unit vectors and linear hyperplanes in $\g_1$,
respectively. For the rest of this section, $I$
denotes a compact interval of positive length. 

	\begin{definition}\label{def:4.1}
	Given a  horizontal curve $\gamma:I\to\G$  and a Borel subset $B\subseteq I$ with $\leb^1(B)>0$,
		we define the \emph{excess} of $\gamma$ on $B$ as
		\[ \exc(\gamma;B):=\inf_{v\in
S^{r-1}}\pa{\media_B\ang{v,\underline{\dgamma}(t)}^2\,dt}^{1/2}. \]
			\end{definition}
	
	\begin{remark}\label{rem:excpiani}
		The excess can be equivalently defined as 
		\[ \exc(\gamma;B):=\inf_{\Pi\in
G(r-1)}\pa{\media_B\abs{\underline{\dgamma}(t)-\Pi\pa{\underline{\dgamma}(t)}}^2\,dt}^{1/2}, \]
		where we identify the hyperplane $\Pi$ with
the orthogonal
projection $\g_1\to\Pi$.  
	\end{remark}	
	
	\begin{remark}\label{rem:exctrdil}
		Given a horizontal curve $\gamma$, $g\in G$ and $r>0$, setting $\gamma_1(t):=g\,\gamma(t)$,
$\gamma_2(t):=\delta_r(\gamma(t))$, we have
		\[	
\exc(\gamma_1;B)=\exc(\gamma;B)\quad\text{and}\quad\exc(\gamma_2;B)=r\exc(\gamma
;B).
		\]
		Moreover, for $\gamma_3(t):=\delta_r(\gamma(t/r))$ we have
$\exc(\gamma_3;rB)=\exc(\gamma;B)$.
	\end{remark}
	
	\begin{remark}\label{exciscont}The map 
		\[ 
S^{r-1}\times
L^2(I,\mathfrak g_1) \ni (v,u) \mapsto\pa{\media_B\ang{v,u(t)}^2\,dt}^{1/2} \in\R
\]
		is continuous. As a consequence, the infimum in Definition \ref{def:4.1}
		is in fact a minimum and, by the compactness of $S^{r-1}$, we have
		\[ \exc(\gamma_k;B)\to\exc(\gamma;B) 
\]
		whenever $\underline{\dgamma_k}\to\underline{\dgamma}$ in $L^2(I,\mathfrak g_1)$.
	\end{remark}

	The following  compactness result for length minimizers parametrized by
arclength implies  
	a certain uniform -- though not explicit -- estimate: see Lemma
\ref{excrinc} below.
	
	\begin{lemma}[Compactness of minimizers]\label{cptmingrp} Let $I$ be a
compact interval and let $\gamma_k:I\to\G$, $k\in\N$,  be a
sequence of 
		length minimizers parametrized by arclength with
$\gamma_k(t_0)=0$, for a fixed $t_0\in I$. Then, there exist a subsequence $\gamma_{k_p}$ and a  length
minimizer
		$\gamma_\infty:I\to\G$, parametrized by arclength and with $\gamma_\infty(t_0)=0$, such that $\gamma_{k_p}\to\gamma_\infty$ uniformly
		and $\underline{\dgamma_{k_p}}\to\underline{\dgamma_\infty}$ in $L^2(I)$.
	\end{lemma}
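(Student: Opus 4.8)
The plan is to combine three standard ingredients: (1) an a priori bound on the curves $\gamma_k$ coming from the fact that they are parametrized by arclength and pass through a fixed point, giving uniform equicontinuity; (2) Ascoli--Arzel\`a to extract a uniformly convergent subsequence; and (3) a weak-compactness argument for the controls, together with lower semicontinuity of length, to show that the limit is again a length minimizer and to upgrade weak convergence of the controls to strong $L^2$ convergence. Throughout we use that in a Carnot group closed balls are compact, so that length minimizers between any two points exist.

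First I would observe that since each $\gamma_k$ is parametrized by arclength and $\gamma_k(t_0)=0$, the estimate \eqref{rem:potenzecoordinatecurve} (after the left translation sending $\gamma_k(t_0)$ to $0$, which does not affect arclength) gives $\|\gamma_k(t)\|\le C|t-t_0|$ for all $t\in I$; in particular the curves all lie in a fixed compact set $K\subseteq G$. Moreover each $\gamma_k$ is $1$-Lipschitz with respect to $d$, hence $d(\gamma_k(t),\gamma_k(t'))\le|t-t'|$, so the family $\{\gamma_k\}$ is equibounded and equicontinuous (in the metric $d$, equivalently in any smooth Riemannian metric on the compact set $K$). By Ascoli--Arzel\`a there is a subsequence $\gamma_{k_p}$ converging uniformly to a curve $\gamma_\infty:I\to K$, which is again $1$-Lipschitz and satisfies $\gamma_\infty(t_0)=0$.

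Next I would pass to the controls. Writing $\dot\gamma_k=\sum_{i=1}^r h_i^{(k)}(t)X_i(\gamma_k(t))$ with $|h^{(k)}(t)|=1$ a.e., the controls $h^{(k)}$ are bounded in $L^2(I,\R^r)$ (indeed in $L^\infty$), so up to a further subsequence $h^{(k_p)}\rightharpoonup h^\infty$ weakly in $L^2$. A routine argument (testing the defining ODE against smooth vector fields and using uniform convergence $\gamma_{k_p}\to\gamma_\infty$ to pass to the limit in the nonlinear term $X_i(\gamma_{k_p}(t))$) shows that $\gamma_\infty$ is horizontal with control $h^\infty$, i.e.\ $\dot\gamma_\infty=\sum_i h^\infty_i X_i(\gamma_\infty)$ a.e. Since $\underline{\gamma_k}=\pi\circ\gamma_k$ and, in exponential coordinates, $\underline{\dot\gamma_k}(t)=\sum_i h_i^{(k)}(t)X_i$ under our identification $\g_1\cong\R^r$, we have $\underline{\dot\gamma_{k_p}}\rightharpoonup\underline{\dot\gamma_\infty}$ weakly in $L^2(I,\g_1)$, and by weak lower semicontinuity of the norm,
\[
\int_I|h^\infty|\,dt=\|\underline{\dot\gamma_\infty}\|_{L^1}\le\liminf_p\|\underline{\dot\gamma_{k_p}}\|_{L^1}=\mathcal L^1(I).
\]
To see that $\gamma_\infty$ is a length minimizer, fix $a<b$ in $I$: uniform convergence gives $d(\gamma_\infty(a),\gamma_\infty(b))=\lim_p d(\gamma_{k_p}(a),\gamma_{k_p}(b))=\lim_p(b-a)=b-a$, while the horizontal curve $\gamma_\infty|_{[a,b]}$ has length $\int_a^b|h^\infty|\le b-a$; hence the length equals the distance and $\gamma_\infty$ minimizes. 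In particular $L(\gamma_\infty|_{[a,b]})=b-a$ forces $|h^\infty(t)|=1$ a.e., so $\gamma_\infty$ is parametrized by arclength.

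Finally, the upgrade from weak to strong $L^2$ convergence of the controls: we have $\underline{\dot\gamma_{k_p}}\rightharpoonup\underline{\dot\gamma_\infty}$ weakly in $L^2(I,\g_1)$ and, since both the approximating controls and the limit control have constant modulus $1$,
\[
\|\underline{\dot\gamma_{k_p}}\|_{L^2(I)}^2=\int_I|h^{(k_p)}|^2\,dt=\mathcal L^1(I)=\int_I|h^\infty|^2\,dt=\|\underline{\dot\gamma_\infty}\|_{L^2(I)}^2,
\]
so the norms converge; combined with weak convergence in the Hilbert space $L^2(I,\g_1)$ this yields $\underline{\dot\gamma_{k_p}}\to\underline{\dot\gamma_\infty}$ strongly in $L^2(I)$, which is the last claim. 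The one point requiring a little care -- and the main technical obstacle -- is the passage to the limit in the horizontal ODE to identify the limit control: one must check that $X_i(\gamma_{k_p}(\cdot))\to X_i(\gamma_\infty(\cdot))$ strongly enough (uniformly, by uniform convergence of $\gamma_{k_p}$ and continuity of the $X_i$) to combine with the weak convergence of $h^{(k_p)}$; everything else is Ascoli--Arzel\`a plus Hilbert-space soft analysis.
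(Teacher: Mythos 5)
Your proposal is correct and follows essentially the same route as the paper: Ascoli--Arzel\`a for the curves, weak $L^2$ compactness of the controls, passage to the limit in the horizontal ODE using uniform convergence of $\gamma_{k_p}$, lower semicontinuity plus $d(\gamma_{k_p}(a),\gamma_{k_p}(b))=b-a$ to identify the limit as an arclength-parametrized minimizer, and convergence of $L^2$ norms to upgrade weak to strong convergence. The only cosmetic difference is that you verify minimality on every subinterval $[a,b]$, whereas the paper runs the equality chain once on the whole interval and reads off both minimality and $|\underline{\dot\gamma}_\infty|=1$ a.e.\ from it.
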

	
	\begin{proof}
	By homogeneity, it is not restrictive to assume   $I=[0,1]$. 
		For any $k$ we have $\gamma_k([0,1])\subseteq\obar{B(0,1)}$, the
closed unit ball,  which is compact. Since all the curves
$\gamma_k$ are $1$-Lipschitz with respect to the Carnot--Carath\'eodory distance
$d$, we can find
		a subsequence $\gamma _{k_p}$ converging uniformly to some curve
$\gamma_\infty$.
		
	Let $u_p:=\underline{\dgamma_{k_p}}$. By \eqref{diffpi} one has $|u_p|=1$ a.e.,  so
up to selecting a further subsequence we can assume that
$u_p\rightharpoonup u_\infty$ in $L^2([0,1],\g_1)$. 
Thus, identifying $G$ with $\R^n$ by exponential coordinates and passing
to the limit as $p\to\infty$ in
		\[ 
\gamma_{k_p}(t)=\int_0^t\pa{\sum_{i=1}^r
u_{p,i}(\tau)X_i(\gamma_{k_p}(\tau))}\,d\tau
\]
		(which holds again by \eqref{diffpi}), we obtain, for any $t\in[0,1]$,
		\[ 
\gamma_\infty(t)=\int_0^t\pa{\sum_{i=1}^r u_{\infty,i}(\tau)X_i(\gamma_\infty(\tau))}\,d\tau. 
\]
		This proves that $\gamma_\infty$ is 
horizontal with $\underline{\dgamma_\infty}=u_\infty$. 		
Moreover,
		\begin{equation}
\label{eq:cptmineq}
		\norm{u_\infty}_{L^2([0,1],\g_1)}\ge L(\gamma_\infty)\ge d(\gamma_\infty(0),\gamma_\infty(1))=\lim_{p\to\infty}d(\gamma_{k_p}(0),\gamma_{k_p}(1))=1.
		\end{equation}
		We already know that $\norm{u_\infty}_{L^2([0,1],\g_1)}\le
1$ (because $u_p\rightharpoonup u_\infty$ and
$\|u_p\|_{L^2([0,1],\g_1)}=1$), so
		$\norm{u_p}_{L^2([0,1],\g_1)}
\to\norm{u_\infty}_{L^2([0,1],\g_1)}$ and, since $L^2([0,1],\g_1)$ is a
Hilbert space, this gives
		$u_p\to u_\infty$ in $L^2([0,1],\g_1)$.
In particular, $\underline{\dgamma_\infty}(t)$ is  for a.e.~$t\in[0,1]$  a unit vector in
$\g_1$. As all inequalities in \eqref{eq:cptmineq} must be equalities, we obtain
$L(\gamma_\infty)=d(\gamma_\infty(0),\gamma_\infty(1))$, i.e.,  $\gamma_\infty$
is a length minimizer parametrized by arclength.
	\end{proof}
	
	\begin{lemma}\label{excrinc} 
	For any  $\epsilon>0$   there exists a constant $c=c(G,\epsilon)>0$
such that the following holds. For any  length minimizer
$\gamma:I\to\G$   parametrized by arclength and such that
		$\exc(\gamma;I)\ge\epsilon$,  there exist $r$ subintervals
$[a_1,b_1],\dots,[a_r,b_r]\subseteq I$, with $a_i<b_i\le a_{i+1}$, such that
		\begin{equation}\label{eq:ildet}
		\abs{\det\pa{\ugamma(b_1)-\ugamma(a_1),\dots,\ugamma(b_r)-\ugamma(a_r)}}\ge c(\leb^1(I))^r.
		\end{equation}
The determinant is defined by means of the
identification of $\g_1$ with $\R^r$ via the basis
$X_1,\dots,X_r$.
	\end{lemma}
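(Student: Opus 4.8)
The plan is to argue by contradiction, combining the compactness of arclength‑parametrized length minimizers (Lemma~\ref{cptmingrp}) with an elementary linear‑algebra argument; the length‑minimizing hypothesis will enter only through that compactness statement. First I would normalize the situation: both sides of \eqref{eq:ildet} are homogeneous of the same degree, since under a dilation $\delta_\lambda$ one has $\ugamma\mapsto\lambda\ugamma$ (because $\pi\circ\delta_\lambda=\lambda\,\pi$), so the determinant scales by $\lambda^r$, exactly as $(\leb^1(I))^r$ does; moreover the differences $\ugamma(b_i)-\ugamma(a_i)$ and the condition $\exc(\gamma;I)\ge\epsilon$ are unaffected by time translations and by left translations of $G$ — the latter because $\pi(gx)=\pi(g)+\pi(x)$, as one reads off the first‑layer part of the Baker--Campbell--Hausdorff formula \eqref{eq:bchpol}, while left translations and dilations act on the excess as described in Remark~\ref{rem:exctrdil}. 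Hence, if the lemma failed for some $\epsilon>0$, we could produce arclength‑parametrized length minimizers $\gamma_k:[0,1]\to\G$ with $\gamma_k(0)=0$, $\exc(\gamma_k;[0,1])\ge\epsilon$, and
\[
\abs{\det\pa{\ugamma_k(b_1)-\ugamma_k(a_1),\dots,\ugamma_k(b_r)-\ugamma_k(a_r)}}<\tfrac1k
\]
for every admissible family of subintervals $[a_1,b_1],\dots,[a_r,b_r]\subseteq[0,1]$ with $a_i<b_i\le a_{i+1}$.

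Next I would pass to the limit. By Lemma~\ref{cptmingrp}, after passing to a subsequence, $\gamma_k\to\gamma_\infty$ uniformly and $\underline{\dgamma_k}\to\underline{\dgamma_\infty}$ in $L^2([0,1])$, where $\gamma_\infty$ is an arclength‑parametrized length minimizer with $\gamma_\infty(0)=0$. Then Remark~\ref{exciscont} gives $\exc(\gamma_\infty;[0,1])=\lim_k\exc(\gamma_k;[0,1])\ge\epsilon>0$. At the same time $\ugamma_k=\pi\circ\gamma_k\to\pi\circ\gamma_\infty=\ugamma_\infty$ uniformly, so for each \emph{fixed} admissible family the determinant (continuous in its entries) passes to the limit, and we obtain
\[
\abs{\det\pa{\ugamma_\infty(b_1)-\ugamma_\infty(a_1),\dots,\ugamma_\infty(b_r)-\ugamma_\infty(a_r)}}=0 ;
\]
that is, for the single curve $\gamma_\infty$ this determinant vanishes identically over all admissible families.

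It then remains to see that such a curve cannot have positive excess. Fix a partition $0=t_0<\dots<t_N=1$ and set $\Delta_j:=\ugamma_\infty(t_j)-\ugamma_\infty(t_{j-1})$. Whenever $j_1<\dots<j_r$, the intervals $[t_{j_i-1},t_{j_i}]$ form an admissible family, so $\det(\Delta_{j_1},\dots,\Delta_{j_r})=0$; hence every $r$‑tuple of the $\Delta_j$ is linearly dependent, and $\dim\mathrm{span}\{\Delta_1,\dots,\Delta_N\}\le r-1$. Applying this along a nested sequence of partitions whose union is dense in $[0,1]$ (say the dyadic ones), the spans of their increments increase (by refinement), remain of dimension $\le r-1$, and therefore stabilize to a subspace $W\subseteq\g_1$ with $\dim W\le r-1$; consequently $\ugamma_\infty(t)-\ugamma_\infty(s)\in W$ for $s,t$ in a dense set, hence for all $s,t\in[0,1]$ by continuity of $\ugamma_\infty$ and closedness of $W$. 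Differentiating, $\underline{\dgamma_\infty}(t)\in W$ for a.e.\ $t$ (recall $\underline{\dgamma_\infty}=\tfrac{d}{dt}\ugamma_\infty$ by \eqref{diffpi}), so choosing a hyperplane $\Pi\in G(r-1)$ with $W\subseteq\Pi$ and invoking Remark~\ref{rem:excpiani} we get $\exc(\gamma_\infty;[0,1])=0$, contradicting $\exc(\gamma_\infty;[0,1])\ge\epsilon$.

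I expect the delicate point to be the order constraint $a_i<b_i\le a_{i+1}$: a priori, vanishing of the determinant only along ordered, essentially disjoint subintervals is much weaker than along arbitrary chords of $\ugamma_\infty$, and it is not immediately clear that it forces all chords into a single hyperplane. The step that unlocks this is the observation that the increments of one partition are automatically ordered and disjoint, so one first derives linear dependence of every $r$‑tuple of a fixed partition's increments and only afterwards propagates the conclusion to all chords via refinement and continuity. The remaining inputs — strong $L^2$‑convergence of the controls (so that the excess passes to the limit) and the existence of a limit minimizer — are precisely what Lemma~\ref{cptmingrp} and Remark~\ref{exciscont} supply, so no further work is needed there.
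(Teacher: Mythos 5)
Your proposal is correct and follows essentially the same route as the paper: normalize by dilation and translation, argue by contradiction, use Lemma \ref{cptmingrp} and Remark \ref{exciscont} to extract a limit minimizer with positive excess whose admissible determinants all vanish, and then show its horizontal projection lies in a hyperplane, contradicting Remark \ref{rem:excpiani}. The only (harmless) difference is in the final linear-algebra step: the paper takes $a_i=t_i$, $b_i=t_i+\delta$ at differentiability points and lets $\delta\downarrow 0$ to conclude directly that the derivatives $\underline{\dgamma_\infty}(t)$ span a proper subspace of $\g_1$, whereas you reach the same conclusion via partition increments and a refinement argument.
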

	
	\begin{proof}
	By Remark \ref{rem:exctrdil} we can  assume that $I=[0,1]$ 
		and that $\gamma(0)=0$.
		By contradiction, assume there exist 
length minimizers $\gamma_k:[0,1]\to\G$ parametrized by arclength,
		with $\gamma_k(0)=0$ and $\exc(\gamma_k;[0,1])\ge\epsilon$, 
such that
	
\begin{equation}\label{eq:absurdseq}\abs{\det\pa{\ubar{\gamma_k}(b_1)-\ubar{
\gamma_k}(a_1),\dots,\ubar{\gamma_k}(b_r)-\ubar{\gamma_k}(a_r)}}\le 2^{-k},
\end{equation}
		for any $0\le a_1<b_1\le\cdots\le a_r<b_r\le 1$.
		By Lemma \ref{cptmingrp}, there exists a subsequence
$(\gamma_{k_p})_{p\in\N}$ such that $\gamma_{k_p}\to\gamma_\infty$ uniformly
		and $\underline{\dgamma_{k_p}}\to\underline{\dgamma_\infty}$ in $L^2([0,1])$ for some length minimizer $\gamma_\infty$ parametrized by arclength.
		Passing to the limit  as $p\to\infty$ in \eqref{eq:absurdseq} we deduce that
		\begin{equation}\label{eq:detzero}
		\det\pa{\ubar{\gamma_\infty}(b_1)-\ubar{\gamma_\infty}(a_1),\dots,\ubar{\gamma_\infty}(b_r)-\ubar{\gamma_\infty}(a_r)}=0,
		\end{equation}
		for any $0\le a_1<b_1\le\cdots\le a_r<b_r\le 1$.
		
		Let  $S$ be the set  of   differentiability
points $t\in (0,1)$  of $\gamma_\infty$ and let
		\[ 
\mathfrak h_1:=\text{span}\{\underline{\dgamma_\infty}(t)\mid t\in S\}\subseteq \g_1 \]
		be the linear subspace of $\mathfrak g_1$ spanned by   the
derivatives $\underline{\dgamma_\infty}(t)$. We claim that $\dimens \mathfrak h_1<r$. If
this were not the case, we could find $0<t_1<\cdots<t_r<1, t_i\in S$, such that
$\underline{\dgamma_\infty}(t_1),\dots,
		\underline{\dgamma_\infty}(t_r)$ are linearly independent. Setting
		\[ a_i:=t_i,\ b_i:=t_i+\delta,\quad i=1,\dots,r \]
		and letting $\delta\downarrow 0$ in \eqref{eq:detzero}, 
we would deduce
that $\det\pa{\underline{\dgamma_\infty}(t_1),\dots,\underline{\dgamma_\infty}(t_r)}=0$, which is a
contradiction.
		
		As a consequence, there exists a unit vector $v \in\mathfrak
g _1$ orthogonal to $\mathfrak h_1$ and we   obtain
		\[ 
\exc(\gamma_\infty;[0,1])\le\pa{\int_0^1\ang{v,\underline{\dgamma_\infty}(t)}^2\,dt}^{1/2}
=0. 
\]
But from  $\exc(\gamma_{k_p};[0,1])\ge\epsilon$ and Remark \ref{exciscont} we
also
have $\exc(\gamma_\infty;[0,1])\ge\epsilon$. This is a contradiction and the
proof is accomplished.
\end{proof}

\begin{remark}\label{rem:elemdet}
Under the same assumptions and notation of Lemma \ref{excrinc},
we also have
\begin{equation}
\label{eq:bimbu}
 |\ugamma(b_i)-\ugamma(a_i) |\geq c\leb^1(I)\qquad \text{for any }i=1,\dots,r.
\end{equation}
Indeed, one has $ |\ugamma(b_i)-\ugamma(a_i) |\leq\leb^1(I)$ by arclength
parametrization and \eqref{eq:ildet} could not hold in case
\eqref{eq:bimbu} were false for some index $i$.
\end{remark}

     \section{Cut and correction devices}\label{sec:cutcor}

In this section we introduce the cut and the iterated correction    of a
horizontal curve. In Lemma \ref{excgain} we compute the gain of length in terms
of the excess. In the formula \eqref{BB}, we  establish an algebraic identity for
the displacement of the end-point produced by an iterated correction. We keep
on working in a Carnot group $G$.

The \emph{concatenation} of two   curves $\alpha:[a,a+a']\to\G$
        and $\beta:[b,b+b']\to\G$ is the curve
$\alpha*\beta:[a,a+(a'+b')]\to\G$ defined  by the
formula
        \[
        \alpha*\beta(t):=\begin{cases}\alpha(t) & \text{if }t\in[a,a+a'] \\
         \alpha(a+a')\beta(b)^{-1}\beta(t+b-(a+a')) & \text{if
}t\in[a+a',a+a'+b'].\end{cases}
          \]
The concatenation $\alpha*\beta$ is continuous if $\alpha$ and $\beta$ are continuous and
 it is horizontal
if $\alpha$ and $\beta$ are horizontal. The operation $*$ is
associative.

    \begin{definition}[Cut curve] \label{cutdef}
Let $\gamma:[a,b]\to\G$ be a   curve.
For any subinterval $[s,s']\subseteq[a,b]$ with $\ugamma(s')\neq \ugamma(s)$
we define the \emph{cut curve}
$\cut(\gamma;[s,s']): [a,b'']\to G$, with $b'' :=
b-(s'-s)+\abs{\ugamma(s')-\ugamma(s)}$, by the formula
        \[
\cut(\gamma;[s,s']):=\restr{\gamma}{[a,s]}*\restr{\exp(\:\cdot\:
w)}{\bra{0,\abs{\ugamma(s')-\ugamma(s)}}}*\restr{\gamma}{[s',b]},
 \] where
\[
 w := \frac{ \ugamma(s')-\ugamma(s)}{\abs{\ugamma(s')-\ugamma(s)}}.
\]
When $\ugamma(s')=\ugamma(s)$, the cut curve is defined by
        \[
\cut(\gamma;[s,s'])=\restr{\gamma}{[a,s]}*\restr{\gamma}{[s',b]}.
\]
    \end{definition}

        \begin{remark}\label{cutlen} If $\gamma$
is parametrized by arclength and horizontal, then the cut curve
$\cut(\gamma;[s,s'])$ is
  parametrized by arclength and horizontal, with length
        \begin{equation} \label{liar}
L(\cut(\gamma;[s,s']))=L(\gamma)-(s'-s)+|\ugamma(s')-\ugamma(s)| .
\end{equation}
    \end{remark}

    \begin{remark}\label{cutsameproj}
The final point of the cut curve has the same projection on $\g_1$ as the final
point of $\gamma$, i.e.,
        $\pi\pa{\cut(\gamma;[s,s'])(b'')}=\pi(\gamma(b)). $
Indeed, by Lemma \ref{pianyhomo} below we have
        \begin{dmath*}
        \pi\pa{\cut(\gamma;[s,s'])(b'')}
=\pi\big(\gamma(s)\exp\pa{\abs{\ugamma(s')-\ugamma(s)}w}\gamma(s')^{-1}
\gamma(b)\big)
=\ugamma(s)+\abs{\ugamma(s')-\ugamma(s)}w+\pa{\ugamma(b)-\ugamma(s')}
        =\ugamma(b). \end{dmath*}
    \end{remark}

    \begin{lemma}\label{excgain} Let
$\gamma:I\to\G$  be a  horizontal curve parametrized by arclength on a
compact interval  $I$  and let
$J\subseteq
I$ be a   subinterval with $\leb^1(J)>0$. Then we have
        \[ L(\gamma)-L(\cut(\gamma;J))\ge\frac{\leb^1(J)}{2}\exc(\gamma;J)^2. \]
    \end{lemma}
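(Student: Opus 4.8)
The plan is to reduce the statement to an elementary estimate on the averaged projected velocity of $\gamma$ over $J$. Write $J=[s,s']$ and $\ell:=\leb^1(J)$. By the length formula \eqref{liar} for the cut curve (Remark \ref{cutlen}),
\[
L(\gamma)-L(\cut(\gamma;J))=\ell-\abs{\ugamma(s')-\ugamma(s)}.
\]
Since $\ugamma(s')-\ugamma(s)=\int_s^{s'}\underline{\dgamma}(t)\,dt=\ell\,m$ with $m:=\media_J\underline{\dgamma}(t)\,dt\in\g_1$, the left-hand side equals $\ell(1-\abs{m})$, and $\abs{m}\le 1$ because $\gamma$ is parametrized by arclength, so that $\abs{\underline{\dgamma}}=1$ a.e.\ by \eqref{diffpi}. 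Thus the lemma is equivalent to the scalar inequality
\[
1-\abs{m}\ge\tfrac12\,\exc(\gamma;J)^2.
\]

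To prove this I would establish the slightly stronger bound $\exc(\gamma;J)^2\le 1-\abs{m}^2$, which suffices because $1-\abs{m}^2=(1-\abs{m})(1+\abs{m})\le 2(1-\abs{m})$. If $m=0$ this is immediate, since $\exc(\gamma;J)^2\le\media_J\abs{\underline{\dgamma}(t)}^2\,dt=1$. If $m\neq 0$, set $v_1:=m/\abs{m}$ and complete it to an orthonormal basis $v_1,\dots,v_r$ of $\g_1$ (this uses $r\ge 2$, the standing assumption on the rank). By Jensen's inequality,
\[
\media_J\ang{v_1,\underline{\dgamma}(t)}^2\,dt\ge\Big(\media_J\ang{v_1,\underline{\dgamma}(t)}\,dt\Big)^2=\ang{v_1,m}^2=\abs{m}^2,
\]
while $\sum_{i=1}^r\media_J\ang{v_i,\underline{\dgamma}(t)}^2\,dt=\media_J\abs{\underline{\dgamma}(t)}^2\,dt=1$. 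Subtracting the two relations gives $\sum_{i=2}^r\media_J\ang{v_i,\underline{\dgamma}(t)}^2\,dt\le 1-\abs{m}^2$, so at least one index $j\in\{2,\dots,r\}$ satisfies $\media_J\ang{v_j,\underline{\dgamma}(t)}^2\,dt\le (1-\abs{m}^2)/(r-1)\le 1-\abs{m}^2$; using this $v_j$ as a competitor in Definition \ref{def:4.1} yields $\exc(\gamma;J)^2\le 1-\abs{m}^2$.

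Combining the two steps, $L(\gamma)-L(\cut(\gamma;J))=\ell(1-\abs{m})\ge\tfrac{\ell}{2}\exc(\gamma;J)^2$, which is the claim. There is no genuine obstacle in this argument; the only point worth isolating is the geometric mechanism behind it: a nontrivial mean $m$ of the unit-speed field $\underline{\dgamma}$ forces the quadratic form $v\mapsto\media_J\ang{v,\underline{\dgamma}}^2\,dt$ to be large in the direction $m/\abs{m}$, and the normalization $\media_J\abs{\underline{\dgamma}}^2\,dt=1$ (the total mass, or trace, of that form) then forces it to be small in some orthogonal direction — which is precisely what the excess records.
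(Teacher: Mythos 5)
Your proof is correct and follows essentially the same route as the paper's: both reduce via the cut-length formula \eqref{liar} to the inequality $\exc(\gamma;J)^2\le 2\bigl(1-\abs{\media_J\underline{\dgamma}}\bigr)$ and then exhibit a unit vector orthogonal to the mean direction as a competitor in the infimum defining the excess. The only difference is the mechanism for bounding that competitor's contribution — you use Jensen plus the trace identity $\sum_i\media_J\ang{v_i,\underline{\dgamma}}^2\,dt=1$, while the paper uses the pointwise bound $\abs{\ang{v,\underline{\dgamma}}}\le\abs{\underline{\dgamma}-w}$ — which incidentally gives you the marginally sharper intermediate estimate $\exc(\gamma;J)^2\le(1-\abs{m}^2)/(r-1)$.
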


    \begin{proof} Let   $J=[s,s']$   for some $s<s'$.
        As in Definition \ref{cutdef}, let $w\in \g_1$ be  a unit
vector such that $\ang{w,\ugamma(s')-\ugamma(s)}=\abs{\ugamma(s')-\ugamma(s)}$,
i.e.,
        \[ \ang{w,\media_s^{s'}\underline\dgamma(t)\,dt}
        = \frac{|\ugamma(s')-\ugamma(s)|}{s'-s}. \]
        Since $\abs{\underline\dgamma}=1$ a.e., we have
        $
\abs{\underline\dgamma-w}^2=2\pa{1-\ang{w,\underline\dgamma}},
$
and since  $r\ge 2$ there exists a unit vector $v\in \g_1$ with
$\ang{v,w}=0$. Thus,
        for all $t$ such that $\underline\dgamma(t)$ is defined we have
        \[
\abs{\ang{v,\underline\dgamma(t)}}=\abs{\ang{v,\underline\dgamma(t)-w}}\le\abs{\underline\dgamma(t)-w}. \]
We deduce that
        \[
        \begin{split}
        \exc(\gamma;[s,s'])^2 & \le
\media_s^{s'}\ang{v,\underline\dgamma(t)}^2\,dt\le\media_s^{s'}\abs{\underline\dgamma(t)-w}^2\,dt\\
        & =  2\pa{1-\ang{w,\media_s^{s'}\underline\dgamma(t)\,dt}}
=2\pa{1-\frac{|\ugamma(s')-\ugamma(s)|}{s'-s}}.
        \end{split}
        \]
Multiplying by $\leb^1(J)=s'-s$ and using \eqref{liar}, we obtain the claim:
        \[ \leb^1(J)\exc(\gamma;J)^2\leq
2\pa{(s'-s)-\abs{\ugamma(s')-\ugamma(s)}}=2\pa{L(\gamma)-L(\cut(\gamma;J))}. \qedhere
\]

    \end{proof}

Given $Y\in\g$, we hereafter denote
by $\delta_Y:[0,\ell_Y]\to\G$ any geodesic from $0$ to $\exp(Y)$ parametrized by
arclength (the choice of $\delta_Y$ is not important); in particular, $\ell_Y=d(0,\exp(Y))$.
We denote by  $\delta_Y(\ell_Y-\cdot\:)$   the curve $\delta_Y$ traveled
backwards from $\exp(Y)$ to $0$.

\newcommand{\Dis}{\mathrm{Dis}}

    \begin{definition} [Corrected curve and displacement]
\label{def:cor}
Let $\gamma:[a,b]\to\G$ be a  horizontal curve  pa\-ra\-me\-trized
by arclength. For any   subinterval $[s,s']\subseteq[a,b]$  and
$Y\in\g$,
        we define the \emph{corrected curve} $\dev(\gamma;[s,s'],Y):[a,b''']\to\G$, with
$b''':=b+2\ell_Y$,  by
        \[
\dev(\gamma;[s,s'],Y):=\restr{\gamma}{[a,s]}*\delta_Y*\restr{\gamma}{[s,s']}
*\delta_Y(\ell_Y-\cdot\:)*\restr{\gamma}{[s',b]}.
\]
We   refer to the process of transforming $\gamma$ into
$\dev(\gamma;[s,s'],Y)$ as to the application of the \emph{correction device}
        associated with $[s,s']$ and $Y$.    The {\em displacement}
of the final point produced by the
correction device associated with $[s,s']$ and $Y$ is
\[
 \Dis(\gamma;[s,s'],Y):=
\gamma(b)^{-1}\dev(\gamma;[s,s'],Y)(b''').
\]
    \end{definition}

We will later express the displacement in terms of suitable conjugations 
$C_g(h):=ghg^{-1}$   and  commutators $[g,h]:=ghg^{-1}h^{-1}$ in $\G$. 

For any $1\le j\le s$, we denote by $\opi_j:\g\to \g_j$ the
canonical projection with respect to the direct sum. 
The mappings  $\pi_j: G\to \g$ are defined as
 $\pi_j := \opi_j \circ \exp^{-1}$. Clearly, one has $\opi_1=\opi$ and $\pi_1=\pi$.
We let $\mathfrak w_j := \g_j\oplus\cdots\oplus\g_s$
and  $G_ j :=\exp (\mathfrak w_j)$. We also agree that   $G_{s+1}:=\{0\}$, the
identity element of $G$, and $\mathfrak
w_{s+1}:=\{0\}$.

	\begin{lemma}\label{pianyhomo} The map
$\pi: G\to (\g_1,+)$ is a group homomorphism and
\begin{equation}\label{diffpi} \pi_*X_i=X_i\qquad \text{for }i=1,\dots,r. \end{equation}
For any $1\le j\le s$,
$G_j$
is a  subgroup of $G$ and  
$\pi_j :G_j \to(\g_j,+)$ is a group
homomorphism.
	\end{lemma}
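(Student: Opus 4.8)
The plan is to verify each assertion of Lemma \ref{pianyhomo} by direct computation with the Baker--Campbell--Hausdorff formula \eqref{eq:bchpol} and the stratification. For the first claim, that $\pi=\opi\circ\exp^{-1}$ is a homomorphism $G\to(\g_1,+)$, I would take $x=\exp(X)$, $y=\exp(Y)$ with $X,Y\in\g$ and compute $\pi(xy)=\opi(P(X,Y))$. The key observation is that every term in \eqref{eq:bchpol} beyond the linear ones $X+Y$ is an iterated bracket of length at least $2$, hence lies in $\g_2\oplus\cdots\oplus\g_s=\mathfrak w_2$, which is killed by $\opi=\opi_1$. Therefore $\opi(P(X,Y))=\opi(X)+\opi(Y)$, i.e. $\pi(xy)=\pi(x)+\pi(y)$. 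For \eqref{diffpi}, I would unwind the definition of pushforward: $\pi_*X_i$ at a point $x$ is $\frac{d}{dt}\big|_{t=0}\pi(x\exp(tX_i))$, and since $\pi$ is a homomorphism into the abelian group $(\g_1,+)$ this equals $\frac{d}{dt}\big|_{t=0}(\pi(x)+t\,\opi(X_i))=\opi(X_i)=X_i$ because $X_i\in\g_1$ for $i\le r$. (One should identify $\g_1$ with its own tangent space in the obvious way; this is the routine part.)

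For the subgroup claim, I would check that $\mathfrak w_j=\g_j\oplus\cdots\oplus\g_s$ is a Lie subalgebra — indeed $[\g_a,\g_b]\subseteq\g_{a+b}$ by the stratification property $\g_k=[\g_1,\g_{k-1}]$ and the Jacobi identity (or one simply cites that $\mathfrak w_j$ is an ideal, being a term of the lower central series), so if $a,b\ge j$ then $a+b\ge j$ and $[\mathfrak w_j,\mathfrak w_j]\subseteq\mathfrak w_j$. Then $G_j=\exp(\mathfrak w_j)$ is a subgroup because in a nilpotent simply connected group the exponential of a subalgebra is the corresponding connected subgroup; concretely, $P(X,Y)\in\mathfrak w_j$ whenever $X,Y\in\mathfrak w_j$ since every bracket term in \eqref{eq:bchpol} stays in $\mathfrak w_j$ and the linear part $X+Y$ does too, and $\exp(X)^{-1}=\exp(-X)$.

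Finally, for the claim that $\pi_j=\opi_j\circ\exp^{-1}$ restricted to $G_j$ is a homomorphism $G_j\to(\g_j,+)$: take $X,Y\in\mathfrak w_j$ and compute $\pi_j(\exp(X)\exp(Y))=\opi_j(P(X,Y))$. Now $P(X,Y)=X+Y+(\text{brackets})$; a bracket of $k\ge 2$ elements of $\mathfrak w_j$ lies in $\g_{j_1}\oplus\cdots$ with all indices $\ge j$, so such a bracket lands in $\mathfrak w_{jk}\subseteq\mathfrak w_{2j}$, in particular in $\g_{j+1}\oplus\cdots\oplus\g_s$, which is annihilated by $\opi_j$ once we know $j+1>j$ — that is, the degree strictly exceeds $j$, so $\opi_j$ sees only the linear part. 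Hence $\opi_j(P(X,Y))=\opi_j(X)+\opi_j(Y)$, giving $\pi_j(\exp X\exp Y)=\pi_j(\exp X)+\pi_j(\exp Y)$. The case $j=1$ recovers the first statement, and $\opi_1=\opi$, $\pi_1=\pi$ as noted.

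I do not expect a genuine obstacle here — the whole lemma is essentially a bookkeeping consequence of the grading $[\g_a,\g_b]\subseteq\g_{a+b}$ together with the structure of the BCH series. The one point that deserves a careful sentence rather than a one-liner is the identity \eqref{diffpi}: one must be slightly careful about where the pushforward is evaluated and about the canonical identification $T_v\g_1\cong\g_1$, but once $\pi$ is known to be a homomorphism into an abelian Lie group the computation is immediate. So the ``hard part,'' such as it is, is simply writing the BCH truncation argument cleanly enough that the reader believes all higher-order terms are projected away; everything else is formal.
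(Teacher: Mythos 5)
Your argument is correct and follows essentially the same route as the paper's proof: the Baker--Campbell--Hausdorff formula \eqref{eq:bchpol} together with the grading $[\g_a,\g_b]\subseteq\g_{a+b}$ shows that all nonlinear terms are projected away by $\opi$ (resp.\ $\opi_j$ on $\mathfrak w_j$), and \eqref{diffpi} then follows from left-invariance of $X_i$. Your write-up is somewhat more explicit than the paper's (which dispatches the $G_j$ and $\pi_j$ claims with ``as above''), but there is no substantive difference.
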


	\begin{proof}
Given points
$g=\exp(x_1X_1+\dots+x_nX_n)$
		and $g'=\exp(x_1'X_1+\dots+x_n'X_n)$ in $G$, by  
\eqref{eq:bchpol} we have 
		$\exp^{-1}(g\, g')= (x_1+x_1')X_1+\dots+(x_r+x_r')X_r+R$ 
with $R\in \mathfrak w  _2 $ and hence
		\[
\pi(g g')=\obar{\pi}(\exp^{-1}
(g g'))=(x_1+x_1')X_1+\dots+(x_r+x_r')X_r=\pi(g)+\pi(g').
\]
The identity \eqref{diffpi} follows from this and the left-invariance of $X_i$.
The fact that $G_j$ is a subgroup follows from the   Baker--Campbell--Hausdorff 
formula, and the assertion that $\pi_j: G_j\to \g_j$ is a homomorphism can be
obtained as above.
	\end{proof}

The following lemmas describe how  the
homomorphisms $\pi_j$ interact with conjugations, commutators and Lie brackets.
We denote by  $\Ad(g)$ the differential of the conjugation $C_g$ at the
identity  $0 \in G$. This
is an
automorphism of
$T_0G=\g$.
For  $X,Y\in \g$ and $g\in G$,
		we have the  formulas
$\Ad(\exp(X))=\mathrm{e}^{\ad(X)} $ and $C_g(\exp(Y))=\exp(\Ad(g)Y)$, see e.g.~\cite[Proposition~1.91]{knapp}.

	\begin{lemma}\label{conjsameproj} 
For any   $g\in  G$ and $h\in  G_j$ we have  $ghg^{-1}\in
G_j$ (i.e., $G_j$ is normal in $G$) and    
$\pi_j(ghg^{-1})=\pi_j(h)$.
	\end{lemma}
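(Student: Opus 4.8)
The plan is to verify both claims—normality of $G_j$ and the projection identity $\pi_j(ghg^{-1}) = \pi_j(h)$—by working entirely at the level of the Lie algebra $\g$ through the exponential coordinates, using the fact that $\Ad(g)$ respects the filtration $\mathfrak w_1 \supseteq \mathfrak w_2 \supseteq \cdots \supseteq \mathfrak w_s$.

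First I would write $h = \exp(Y)$ with $Y \in \mathfrak w_j = \g_j \oplus \cdots \oplus \g_s$, so that $C_g(h) = ghg^{-1} = \exp(\Ad(g)Y)$ by the formula $C_g(\exp(Y)) = \exp(\Ad(g)Y)$ recalled just before the lemma. Thus everything reduces to understanding $\Ad(g)Y$. Writing $g = \exp(X)$ for some $X \in \g$ (the exponential is a diffeomorphism), we have $\Ad(g) = e^{\ad X}$. The key structural observation is that $\ad X$ maps $\mathfrak w_i$ into $\mathfrak w_{i+1} \subseteq \mathfrak w_i$ for every $i$: indeed $[\g_k, \g_i] \subseteq \g_{k+i}$ by the grading, so $[\g, \g_i] \subseteq \mathfrak w_{i+1}$, and summing over $i \geq j$ gives $\ad X(\mathfrak w_j) \subseteq \mathfrak w_{j}$ (in fact $\subseteq \mathfrak w_{j+1}$, but $\subseteq \mathfrak w_j$ suffices here). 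Since $\mathfrak w_j$ is a linear subspace invariant under $\ad X$, it is invariant under $e^{\ad X} = \Ad(g)$, hence $\Ad(g)Y \in \mathfrak w_j$ and therefore $ghg^{-1} = \exp(\Ad(g)Y) \in \exp(\mathfrak w_j) = G_j$. This proves normality.

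For the projection identity, I would refine the previous computation: decompose $Y = Y_j + Y'$ with $Y_j \in \g_j$ and $Y' \in \mathfrak w_{j+1}$. Then $\Ad(g)Y = e^{\ad X}Y = Y + \ad X(Y) + \tfrac{1}{2}(\ad X)^2 Y + \cdots$. By the grading, $\ad X(Y) \in \ad X(\mathfrak w_j) \subseteq \mathfrak w_{j+1}$, and likewise every higher term $(\ad X)^m Y$ for $m \geq 1$ lies in $\mathfrak w_{j+1}$. Also $Y' \in \mathfrak w_{j+1}$. Hence $\Ad(g)Y - Y_j \in \mathfrak w_{j+1}$, which means $\opi_j(\Ad(g)Y) = Y_j = \opi_j(Y)$. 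Applying $\pi_j = \opi_j \circ \exp^{-1}$ to $ghg^{-1} = \exp(\Ad(g)Y)$ and to $h = \exp(Y)$ yields $\pi_j(ghg^{-1}) = \opi_j(\Ad(g)Y) = \opi_j(Y) = \pi_j(h)$, as desired.

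I do not expect any serious obstacle: the proof is a direct unwinding of the definitions combined with the two standard formulas for $\Ad$ already quoted in the text, plus the defining property $[\g_1,\g_{j-1}] = \g_j$ of the stratification (which gives $[\g_k,\g_i]\subseteq\g_{k+i}$ by iteration, with the convention $\g_m = 0$ for $m > s$). The only point requiring a line of care is making explicit that $\ad X$ strictly increases the filtration degree by at least one, which is exactly what makes both the invariance of $\mathfrak w_j$ and the vanishing of the correction terms modulo $\mathfrak w_{j+1}$ work; this is where the grading (as opposed to mere nilpotency) is used.
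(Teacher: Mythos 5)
Your proof is correct and follows essentially the same route as the paper's: write $g=\exp(X)$, $h=\exp(Y)$, expand $\exp^{-1}(ghg^{-1})=\Ad(g)Y=e^{\ad X}Y=Y+R$ and observe that all terms with at least one $\ad X$ lie in $\mathfrak w_{j+1}$, so that $ghg^{-1}\in G_j$ and $\opi_j$ kills the remainder. You merely spell out a couple of steps the paper leaves implicit (the grading property $[\g_k,\g_i]\subseteq\g_{k+i}$ and the decomposition $Y=Y_j+Y'$), which is fine.
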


	\begin{proof} With $g=\exp(X)$ and $h =\exp(Y)$, we have 
		\[ 
\exp^{-1}(ghg^{-1})=\Ad(g)Y=\mathrm{e}^{\ad
X}Y=\sum_{k=0}^\infty\frac{(\ad X)^k}{k!}Y=Y+R, \]
with  $R\in \mathfrak w _{j+1}$, because in the previous sum all
the terms with
$k\ge 1$ belong
to $\mathfrak w_{j+1}$.
		Hence, we have $ghg^{-1}\in G _j$ and
		\[
\pi_j(ghg^{-1})=\opi_j\circ\exp^{-1}(ghg^{-1})=\opi_j(Y+R)=\opi_j(Y)=\pi_j(h). \qedhere
\]
\end{proof}

	\begin{lemma}\label{commproj} For any
$g\in G$ and
$h\in G_j$ with  $1\le j<s$ we have 
		\[ 
[g,h]\in G_{j+1} \quad\text{and}\quad 
\pi_{j+1}([g,h])=[\pi(g),\pi_j(h)]. \]
A similar statement holds if $g\in G_j$ and $h\in G$.
	\end{lemma}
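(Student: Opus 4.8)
The plan is to prove Lemma \ref{commproj} by a direct Baker--Campbell--Hausdorff computation, mimicking the proof of Lemma \ref{conjsameproj}. First I would write $g=\exp(X)$ with $X\in\g$ and $h=\exp(Y)$ with $Y\in\mathfrak w_j$, so that by Lemma \ref{conjsameproj} the conjugate $ghg^{-1}=\exp(\Ad(g)Y)=\exp(Y+R)$ with $R\in\mathfrak w_{j+1}$. Then $[g,h]=(ghg^{-1})h^{-1}=\exp(Y+R)\exp(-Y)$, and applying \eqref{eq:bchpol} to this product, I would observe that $\exp^{-1}([g,h])=P(Y+R,-Y)$. Expanding, the degree-zero-in-brackets term is $(Y+R)+(-Y)=R\in\mathfrak w_{j+1}$, while every genuine bracket $[\,\cdot\,,\ldots,\cdot\,]$ in $P(Y+R,-Y)$ involves at least one factor from $\mathfrak w_j$ and at least one further factor, hence lies in $[\g,\mathfrak w_j]\subseteq\mathfrak w_{j+1}$. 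Therefore $\exp^{-1}([g,h])\in\mathfrak w_{j+1}$, which gives $[g,h]\in G_{j+1}$.

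Next I would extract the $\g_{j+1}$-component. Since $R\in\mathfrak w_{j+1}$, the part of $R$ lying in $\g_{j+1}$ must be isolated: writing $\Ad(g)Y=\mathrm e^{\ad X}Y=Y+[X,Y]+\tfrac12[X,[X,Y]]+\cdots$, only the term $[X,Y]$ has a component in $\g_{j+1}$ (all higher iterated brackets lie in $\mathfrak w_{j+2}$), and $\opi_{j+1}([X,Y])=[\opi_1 X,\opi_j Y]=[\pi(g),\pi_j(h)]$ because brackets of elements of $\g_i$ and $\g_j$ land in $\g_{i+j}$, so only the $\g_1$-part of $X$ and the $\g_j$-part of $Y$ contribute to degree $j+1$. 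Finally, in the BCH expansion of $P(Y+R,-Y)$, the only contribution to $\g_{j+1}$ beyond $\opi_{j+1}(R)$ would come from brackets, but these all lie in $\mathfrak w_{j+2}$ (they involve $Y+R$ or $-Y$, each of which has no component below degree $j$, so any bracket of two of them has degree at least $2j\ge j+1$, and with equality only if $j=1$; even then, such a bracket of two first-layer elements built from $Y+R$ and $-Y$ is $[\,\cdot\,,\cdot\,]$ of the $\g_1$-parts, but the $\g_1$-parts of $Y+R$ and $-Y$ are $Y$ and $-Y$ respectively, whose bracket vanishes). Hence $\pi_{j+1}([g,h])=\opi_{j+1}(\exp^{-1}([g,h]))=\opi_{j+1}(R)=[\pi(g),\pi_j(h)]$.

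For the symmetric statement with $g\in G_j$ and $h\in G$, I would use $[g,h]=[h,g]^{-1}$ together with the antisymmetry of the bracket on $\g_{j+1}$: since $\pi_{j+1}$ is a homomorphism on $G_{j+1}$ by Lemma \ref{pianyhomo} and inversion on $G$ corresponds to negation in exponential coordinates modulo higher-order terms, $\pi_{j+1}([g,h])=-\pi_{j+1}([h,g])=-[\pi(h),\pi_j(g)]=[\pi_j(g),\pi(h)]$, where in the last equality $\pi(h)$ should be read as $\opi_1(\exp^{-1}h)$ and $\pi_j(g)=\opi_j(\exp^{-1}g)$, consistently with the first case after swapping the roles.

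The main obstacle I anticipate is the bookkeeping in the BCH expansion: one must carefully check that no bracket term of $P(Y+R,-Y)$ sneaks a contribution into $\g_{j+1}$, which requires being precise about the grading and, in the borderline case $j=1$, exploiting the fact that the degree-one parts of the two arguments $Y+R$ and $-Y$ are exactly opposite and hence bracket to zero. Once the grading argument is set up cleanly — ideally by noting that modulo $\mathfrak w_{j+2}$ one has $\exp^{-1}([g,h])\equiv\Ad(g)Y-Y\equiv[X,Y]\equiv[\opi_1 X,\opi_j Y]$ — the identity follows at once.
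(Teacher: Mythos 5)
Your proof is correct and follows essentially the same route as the paper's: a Baker--Campbell--Hausdorff computation for $\exp^{-1}([g,h])$ combined with bookkeeping of the stratification. The only cosmetic differences are that the paper obtains $[g,h]\in G_{j+1}$ from the homomorphism property of $\pi_j$ (Lemmas \ref{pianyhomo} and \ref{conjsameproj}) rather than re-running the grading argument, and that it isolates the term $[X,Y]$ in $\Ad(g)Y$ before applying BCH, so the remainder is disposed of uniformly (iterated brackets containing only $Y$ vanish, all others contain a factor in $\mathfrak w_{j+1}$ and hence lie in $\mathfrak w_{j+2}$) without your separate case analysis at $j=1$.
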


	\begin{proof}
We prove only the first part of the statement,
the second one following
from the first one and the identity $[g,h]=[h,g]^{-1}$.
	Combining Lemma \ref{conjsameproj} with Lemma
\ref{pianyhomo}, we obtain
		$[g,h]=(ghg^{-1})h^{-1}\in G_j$ and
		\[
\pi_j([g,h])=\pi_j(ghg^{-1})+\pi_j(h^{-1})=\pi_j(h)-\pi_j(h)=0, \]
		so that $[g,h]\in G_{j+1}$. Now, writing $g=\exp(X)$,
$h=\exp(Y)$ and using the formula $\exp^{-1}(ghg^{-1})=\mathrm{e}^{\ad X}Y$
		as in the previous proof, we obtain
		\[ \exp^{-1}(ghg^{-1})=\sum_{k=0}^\infty\frac{(\ad
X)^k}{k!}Y=Y+[X,Y]+R', \]
		where the remainder $R'$ is the sum of all terms with $k\ge 2$
and thus belongs to $\mathfrak w_{j+2}$.
		As $h^{-1}=\exp(-Y)$, the Baker--Campbell--Hausdorff  
formula gives
		\[
 \exp^{-1}([g,h])=P(Y+[X,Y]+R',-Y)=[X,Y]+R'+R'', \]
		where $R''$ is given by the commutators of length at least $2$ appearing in \eqref{eq:bchpol}.
Now, thinking each such commutator
		as a $(k_1+\ell_1+\dots+k_p+\ell_p)$-multilinear function (and
expanding each instance of $Y+[X,Y]+R'$ accordingly),
		we obtain that $R''$ is a linear combination of elements of the
form
		\[ (\ad Z_1)\cdots(\ad Z_k)Z_{k+1}, \]
		where $k\ge 1$ and $Z_i\in\{Y,[X,Y],R'\}$. Those elements
where only $Y$ appears vanish,
		while the other terms belong to $\mathfrak w_{j+2}$, since
$[X,Y],R'\in
\mathfrak w_{j+1}$ and $k\ge 1$. We deduce that $R''\in \mathfrak w_{j+2}$.
		Finally,
		\[
\pi_{j+1}([g,h])=\opi_{j+1}([X,Y]+R'+R'')=\opi_{j+1
}([X,Y])
		=[\opi(X),\opi_j(Y)], \]
		since $X=\opi(X)+R_X$ and $Y=\opi_j(Y)+R_Y$, with $R_X\in
\mathfrak w_2$
and $R_Y\in \mathfrak w_{j+1}$.
%
	\end{proof}

Hereafter, we  adopt the short notation
$\incr{\gamma}_a^b:=\gamma(a)^{-1}\gamma(b)$.

    \begin{lemma}\label{lem:corstratoj}
    Under the   assumptions and notation of Definition \ref{def:cor}, the
displacement  is given by the formula
        \begin{equation}
\label{eq:deveffect}
 \Dis(\gamma;[s,s'],Y) =C_{\incr{\gamma}_b^s}
        \pa{\bra{\exp(Y),\incr{\gamma}_s^{s'}}}. \end{equation}
    In particular, if $Y\in \g_j$ and $1\leq j<s$, then
$
 \Dis(\gamma;[s,s'],Y) \in\G_{j+1}$ and
        \[ \pi_{j+1}\pa{
 \Dis(\gamma;[s,s'],Y) }=[Y,\ugamma(s')-\ugamma(s)]. \]
    \end{lemma}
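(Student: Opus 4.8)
The plan is to compute the displacement directly from the definitions by exploiting the telescoping structure of the concatenation operation $*$. First I would write out, step by step, the value of $\dev(\gamma;[s,s'],Y)$ at its final time $b'''=b+2\ell_Y$ by applying the formula for $*$ three times. Concatenating $\restr{\gamma}{[a,s]}$ with $\delta_Y$ multiplies the endpoint $\gamma(s)$ on the right by $\delta_Y(0)^{-1}\delta_Y(\ell_Y)=\exp(Y)$; concatenating with $\restr{\gamma}{[s,s']}$ multiplies on the right by $\gamma(s)^{-1}\gamma(s')=\incr{\gamma}_s^{s'}$; concatenating with $\delta_Y(\ell_Y-\cdot)$ multiplies on the right by $\exp(Y)^{-1}$; and concatenating with $\restr{\gamma}{[s',b]}$ multiplies on the right by $\gamma(s')^{-1}\gamma(b)=\incr{\gamma}_{s'}^{b}$. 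Collecting these factors, the endpoint is
\[
\dev(\gamma;[s,s'],Y)(b''')=\gamma(s)\,\exp(Y)\,\incr{\gamma}_s^{s'}\,\exp(Y)^{-1}\,\incr{\gamma}_{s'}^{b}.
\]
Then $\Dis(\gamma;[s,s'],Y)=\gamma(b)^{-1}\dev(\gamma;[s,s'],Y)(b''')$, and I would insert $\gamma(b)^{-1}=\incr{\gamma}_{s'}^{b}\,{}^{-1}\gamma(s')^{-1}$, rearranging to recognize the commutator $\bra{\exp(Y),\incr{\gamma}_s^{s'}}=\exp(Y)\,\incr{\gamma}_s^{s'}\,\exp(Y)^{-1}\,(\incr{\gamma}_s^{s'})^{-1}$ and the conjugation by $\incr{\gamma}_b^s=\gamma(b)^{-1}\gamma(s)$. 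Some care with associativity of $*$ (already noted in the paper) and with the bookkeeping of which group element sits where is required, but this is routine; the identity \eqref{eq:deveffect} falls out after collecting terms.

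For the second part, assume $Y\in\g_j$ with $1\le j<s$. Since $\exp(Y)\in G_j$ and $\incr{\gamma}_s^{s'}\in G$, Lemma \ref{commproj} gives $\bra{\exp(Y),\incr{\gamma}_s^{s'}}\in G_{j+1}$ together with the bracket formula $\pi_{j+1}\big(\bra{\exp(Y),\incr{\gamma}_s^{s'}}\big)=[\pi(\exp(Y)),\pi_j(\exp(Y))]$ — wait, more precisely $=[\pi(\incr{\gamma}_s^{s'}),\pi_j(\exp(Y))]$ once we use the ``$g\in G_j$, $h\in G$'' version, or equivalently $[\pi_j(\exp(Y)),\pi(\incr{\gamma}_s^{s'})]$ read in the right order; since $\pi_j(\exp(Y))=Y$ and, by Lemma \ref{pianyhomo} applied to $\pi=\pi_1$, $\pi(\incr{\gamma}_s^{s'})=\pi(\gamma(s'))-\pi(\gamma(s))=\ugamma(s')-\ugamma(s)$, we get the bracket $[Y,\ugamma(s')-\ugamma(s)]$ up to sign, which I would fix by keeping the commutator order consistent with the statement. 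Finally, conjugation preserves $G_{j+1}$ and leaves $\pi_{j+1}$ unchanged by Lemma \ref{conjsameproj}, so $\Dis(\gamma;[s,s'],Y)\in G_{j+1}$ and $\pi_{j+1}(\Dis(\gamma;[s,s'],Y))=[Y,\ugamma(s')-\ugamma(s)]$.

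The only genuine obstacle is the first computation: making sure the recursive definition of $*$ is unwound correctly so that the stray factors $\gamma(s)^{-1}$, $\gamma(s')^{-1}$ telescope properly and nothing is dropped. Once the endpoint of $\dev$ is written in closed form, everything else is an application of the three homomorphism lemmas (Lemmas \ref{pianyhomo}, \ref{conjsameproj}, \ref{commproj}) already established, so I expect the proof to be short.
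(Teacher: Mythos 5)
Your proposal is correct and follows essentially the same route as the paper: unwind the concatenation to get $\gamma(s)\exp(Y)\incr{\gamma}_s^{s'}\exp(-Y)\incr{\gamma}_{s'}^{b}$, rewrite it via the commutator and a conjugation by $\incr{\gamma}_b^s$, then apply Lemmas \ref{pianyhomo}, \ref{commproj} (in the $g\in G_j$, $h\in G$ form, which indeed yields $[\pi_j(\exp(Y)),\pi(\incr{\gamma}_s^{s'})]=[Y,\ugamma(s')-\ugamma(s)]$) and \ref{conjsameproj}. The momentary hesitation about the order of the bracket resolves exactly as you indicate, so there is no gap.
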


    \begin{proof}
    We have
        \begin{dmath*}
\dev(\gamma;[s,s'],Y)(b''')=\gamma(s)\exp(Y)\incr{\gamma}_s^{s'}\exp(-Y)\incr{
\gamma}_{s'}^b           
=\gamma(s)\bra{\exp(Y),\incr{\gamma}_s^{s'}}\incr{\gamma}_s^{s'}\incr{\gamma}_{
s'}^b
            =\gamma(s)\bra{\exp(Y),\incr{\gamma}_s^{s'}}\incr{\gamma}_s^b,
\end{dmath*}
     hence
        \[
        \Dis(\gamma;[s,s'],Y)
)=\incr{\gamma}_b^s\bra{\exp(Y),\incr{\gamma}_s^{s'}}\pa{\incr{\gamma}_b^s}^{-1}
    =C_{\incr{\gamma}_b^s}\pa{\bra{\exp(Y),\incr{\gamma}_s^{s'}}}.
             \]
By Lemma \ref{pianyhomo}, we have $
\pi(\incr{\gamma}_s^{s'})=\ugamma(s')-\ugamma(s)$; moreover, $\pi_j(\exp(Y))=Y$.
Hence, using Lemma \ref{commproj}, we obtain
        \[ \bra{\exp(Y),\incr{\gamma}_s^{s'}}\in
G_{j+1}\quad\text{and}\quad\pi_{j+1}\pa{\bra{\exp(Y),\incr{\gamma}_s^{s'}}}=[Y,
\ugamma(s')-\ugamma(s)]. \]
        The lemma now follows from equation \eqref{eq:deveffect} and Lemma
\ref{conjsameproj}.
    \end{proof}

    \begin{definition}[Iterated correction]\label{coupledev}
Let   $\gamma:I\to\G$ be a horizontal curve
pa\-ra\-me\-trized by arclength on the interval $I$ and let
$I_1:=[s_1,t_1],\dots,I_k:=[s_k,t_k]\subseteq I$ be subintervals
        with $t_i\le s_{i+1}$. For any
$Y_1,\dots,Y_k\in\g$ we define by induction on $k\geq 2$ the iterated
correction
        \begin{equation*}
          \dev(\gamma;I_1,Y_1;\dots;I_k,Y_k)\\
        :=   \dev(
\dev(\gamma;I_1,Y_1;\dots;I_{k-1},Y_{k-1});I_k+2{\textstyle\sum_{i<k}}\ell_{Y_i},Y_k).
        \end{equation*}
    \end{definition}

The iterated correction is a curve defined on the interval $[a,\widehat b]$,
with $\widehat b := b+2 \sum_{i=1}^k \ell_{Y_{i}} $.
The displacement
of the final point produced by this iterated correction  is
\[
 \Dis
(\gamma;I_1,Y_1;\dots;I_k,Y_k) :=
\gamma(b)^{-1}
\dev
(\gamma;I_1,Y_1;\dots;I_k,Y_k) (\widehat b).
\]

    \begin{corollary}\label{mdeveffectuse}
For any  $I_i=[s_i,t_i]\subseteq I$ and  $Y_i\in \g_j$,  with $i=1,\ldots,k$ and
$j<s$,
 we have
        \begin{equation}\label{AA}
 \Dis
(\gamma;I_1,Y_1;\dots;I_k,Y_k) \in\G_{j+1} \end{equation}
        and
        \begin{equation}\label{BB}
         \pi_{j+1}\pa{
 \Dis
(\gamma;I_1,Y_1;\dots;I_k,Y_k)}=\sum_{i=1}^k[Y_i,\ugamma(t_i)-\ugamma(s_i)].
\end{equation}
    \end{corollary}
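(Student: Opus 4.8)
\textbf{Proof proposal for Corollary \ref{mdeveffectuse}.}
The plan is to argue by induction on $k$, with Lemma \ref{lem:corstratoj} furnishing both the base case and the inductive mechanism. First I would set up the induction hypothesis: for the iterated correction $\dev(\gamma;I_1,Y_1;\dots;I_{k-1},Y_{k-1})$ associated with vectors $Y_i\in\g_j$ and $j<s$, the displacement lies in $G_{j+1}$ and its $\pi_{j+1}$-projection equals $\sum_{i=1}^{k-1}[Y_i,\ugamma(t_i)-\ugamma(s_i)]$. The base case $k=1$ is exactly the second part of Lemma \ref{lem:corstratoj}, so nothing new is needed there. (Strictly speaking Definition \ref{coupledev} starts the recursion at $k\geq 2$, so one should phrase the base case as $k=1$ understood directly from Definition \ref{def:cor} and Lemma \ref{lem:corstratoj}.)

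For the inductive step, write $\gamma':=\dev(\gamma;I_1,Y_1;\dots;I_{k-1},Y_{k-1})$, a horizontal curve parametrized by arclength defined on $[a,b']$ with $b'=b+2\sum_{i<k}\ell_{Y_i}$, and observe that by Definition \ref{coupledev} the $k$-th iterated correction is obtained from $\gamma'$ by a single correction device associated with the shifted interval $I_k+2\sum_{i<k}\ell_{Y_i}$ and the vector $Y_k$. The key point is that applying one correction device and reading off the displacement of the \emph{overall} final point decomposes as a product of two displacements. Concretely, the final point of $\dev(\gamma;I_1,Y_1;\dots;I_k,Y_k)$ equals the final point of $\dev(\gamma';I_k+2\sum_{i<k}\ell_{Y_i},Y_k)$, so by the definition of $\Dis$ applied to $\gamma'$ we get
\[
\gamma'(b')^{-1}\dev(\gamma;I_1,Y_1;\dots;I_k,Y_k)(\widehat b)=\Dis\big(\gamma';I_k+{\textstyle 2\sum_{i<k}\ell_{Y_i}},Y_k\big),
\]
and therefore, since $\gamma'(b')=\gamma(b)\cdot\Dis(\gamma;I_1,Y_1;\dots;I_{k-1},Y_{k-1})$ by the definition of the displacement at level $k-1$,
\[
\Dis(\gamma;I_1,Y_1;\dots;I_k,Y_k)=\Dis(\gamma;I_1,Y_1;\dots;I_{k-1},Y_{k-1})\cdot\Dis\big(\gamma';I_k+{\textstyle 2\sum_{i<k}\ell_{Y_i}},Y_k\big).
\]
By the inductive hypothesis the first factor lies in $G_{j+1}$, and by Lemma \ref{lem:corstratoj} applied to $\gamma'$ the second factor also lies in $G_{j+1}$ (note that the horizontal projection $\underline{\gamma'}$ agrees with $\ugamma$ on the relevant subinterval, since the correction devices only insert segments $\delta_{Y_i}$ and time-translate pieces of $\gamma$, and $\pi(\exp(Y_i))=0$ for $Y_i\in\g_j$ with $j\geq 2$ by Lemma \ref{pianyhomo} — and if $j=1$ one checks the projection increments cancel just as in Remark \ref{cutsameproj}). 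Hence the product lies in $G_{j+1}$, proving \eqref{AA}. Since $\pi_{j+1}$ restricted to $G_{j+1}$ is a group homomorphism by Lemma \ref{pianyhomo}, formula \eqref{BB} follows by adding the two contributions: $\sum_{i=1}^{k-1}[Y_i,\ugamma(t_i)-\ugamma(s_i)]$ from the inductive hypothesis and $[Y_k,\ugamma(t_k)-\ugamma(s_k)]$ from Lemma \ref{lem:corstratoj}, using that the shift of $I_k$ does not affect $\ugamma(t_k)-\ugamma(s_k)$ because $\underline{\gamma'}$ on the shifted interval is a time-translate of $\ugamma$ on $I_k$.

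The main obstacle I anticipate is the bookkeeping in the last point: one must be careful that, after performing $k-1$ corrections, the relevant increment $\underline{\gamma'}(t_k+2\sum_{i<k}\ell_{Y_i})-\underline{\gamma'}(s_k+2\sum_{i<k}\ell_{Y_i})$ genuinely equals $\ugamma(t_k)-\ugamma(s_k)$ and not something perturbed by the inserted geodesic segments. This holds because each correction device inserts the pair $\delta_{Y_i}$ and $\delta_{Y_i}(\ell_{Y_i}-\,\cdot\,)$ \emph{outside} or \emph{around} earlier intervals in a way that leaves the portion of the curve over $I_k$ a pure time-translate (composed with a fixed left translation, which does not affect $\pi\circ(\text{increment})$ by Lemma \ref{pianyhomo}); making this precise is the only slightly delicate step, and it is essentially the same observation already used implicitly in Definition \ref{coupledev} and Lemma \ref{lem:corstratoj}. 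Everything else is a direct consequence of the homomorphism properties in Lemma \ref{pianyhomo} and the single-device formula \eqref{eq:deveffect}.
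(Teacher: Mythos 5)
Your proposal is correct and follows essentially the same route as the paper: induction on $k$, factoring the total displacement as $\Dis(\gamma;I_1,Y_1;\dots;I_{k-1},Y_{k-1})\cdot\Dis(\widehat\gamma;I_k+2\sum_{i<k}\ell_{Y_i},Y_k)$, then applying Lemma \ref{lem:corstratoj} to the partially corrected curve and the homomorphism property of $\pi_{j+1}$, together with the observation that the projected increment over the shifted interval is unchanged. Your extra care about why $\underline{\gamma'}$ restricted to the shifted $I_k$ is a translate of $\ugamma$ on $I_k$ is the same fact the paper uses (stated there without elaboration), so there is nothing to add.
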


    \begin{proof} We prove \eqref{BB} by induction on $k$. The case $k=1$ is in
Lemma \ref{lem:corstratoj}.
    Assume the formula holds for $k-1$. Letting $\widehat{\gamma} :=
\dev(\gamma;I_1,Y_1;\ldots;I_{k-1}, Y_{k-1})$, which is defined on the interval
$[a,\widehat b]$ (where $\widehat{b}:=b+2\sum_{i<k}\ell_{Y_i}$),
    we have
    \[
    \begin{split}
      \Dis(\gamma;I_1,Y_1; \ldots; I_k,Y_k) & = \gamma(b)^{-1}\dev(\widehat{
\gamma}; I_k+(\widehat{b}-b),Y_k)
      \\
      & = \gamma(b)^{-1}~\widehat{\gamma} (\widehat{b})~\Dis(\widehat{ \gamma};
I_k+(\widehat{b}-b),Y_k)
      \\
      & = \Dis (\gamma;I_1,Y_1;\ldots; I_{k-1},Y_{k-1})~\Dis(\widehat {\gamma};
I_k+(\widehat{b}-b),Y_k).
    \end{split}
    \]
    Then, by Lemma \ref{pianyhomo}, by the inductive assumption and by Lemma
\ref{lem:corstratoj} applied to $\widehat{\gamma}$
we
have
    \[
    \begin{split}
     & \pi_{j+1}\big(  \Dis(\gamma;I_1,Y_1; \ldots; I_k,Y_k) \big)\\
      = &\sum_{i=1}^{k-1} [ Y_i,\ugamma(t_i)-\ugamma(s_i)]
        + [ Y_k, \widehat{\ugamma}(t_k+(\widehat{b}-b))-\widehat{\ugamma}(s_k+(\widehat{b}-b))]
\\
= & 
\sum_{i=1}^{k} [ Y_i,\ugamma(t_i)-\ugamma(s_i)],
    \end{split}
    \]
    because $\widehat{\ugamma}(t_k+(\widehat{b}-b))-\widehat{\ugamma}(s_k+(\widehat{b}-b))=
     \ugamma (t_k)- \ugamma (s_k)$.
    \end{proof}

When dealing with curves $\gamma$ defined on symmetric
intervals, it is convenient to use
        modified versions of $\cut$ and $\dev$, which we will denote by
$\cut'(\gamma;[s,s'])$ and $\dev'(\gamma;[s,s'],Y)$. They  are obtained
from $\cut(\gamma;[s,s'])$ and $\dev(\gamma;[s,s'],Y)$ by composition with the
time translation   such  that the new domain is  a symmetric
interval. The iterated correction is then defined in the following way:
        \begin{equation*}
          \dev'(\gamma;I_1,Y_1,\dots;I_k,Y_k)\\
        :=   \dev'(
\dev'(\gamma;I_1,Y_1;\dots;I_{k-1},Y_{k-1});I_k+{\textstyle\sum_{i<k}}\ell_{Y_i},Y_k).
        \end{equation*}
The related displacement  satisfies the properties \eqref{AA} and \eqref{BB} of
Corollary
\ref{mdeveffectuse} with $\dev'$ replacing $\dev$.

    \section{Proof of the main results} \label{sec:proofs}

\renewcommand{\rho}{\varrho}

Let $G$ be a Carnot group with rank $r\geq 2$ and step $s$, and let
$\mathcal X = \{X_1,\ldots,X_r\}$ be an orthonormal basis for $\mathfrak g_1$ (recall that $\g$ is endowed with a scalar product such that $\g_i\perp\g_j$). We first prove the one-sided version of Theorem \ref{1.2}; we 
will
illustrate later how to adapt the   proof     in order to obtain
Theorem \ref{1.2}.

    \begin{theorem}\label{onesthm}
    Let $\gamma:[0,T]\to G$, $T>0$, be a length-minimizing curve
parametrized by arclength.     Then  there exists an infinitesimal sequence
$\eta_i\downarrow 0$ such that
    \begin{equation} \label{ecconesided}
        \lim_{i\to\infty} \Exc(\gamma;[0,\eta_i]) = 0.
    \end{equation}
    \end{theorem}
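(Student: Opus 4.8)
The plan is to argue by contradiction, following the cut-and-adjust strategy outlined in the introduction. Suppose the conclusion fails: then there exist $\varepsilon>0$ and $\eta_0>0$ such that $\exc(\gamma;[0,\eta])\ge\varepsilon$ for every $\eta\in(0,\eta_0]$. After a preliminary left-translation we may assume $\gamma(0)=0$, and by the scaling properties in Remark \ref{rem:exctrdil} it suffices to derive a contradiction at a single, conveniently chosen scale $\eta$. The idea is to modify $\gamma$ on a short initial interval $[0,\eta]$ so as to strictly decrease its length while keeping the same endpoints, contradicting the fact that $\gamma$ is a length minimizer.

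The construction proceeds in $s$ steps, matching the stratification $\g=\g_1\oplus\cdots\oplus\g_s$. First I would apply Lemma \ref{excrinc} (in the one-sided form, on $I=[0,\eta]$): since $\exc(\gamma;[0,\eta])\ge\varepsilon$, we obtain $r$ disjoint subintervals $[a_i,b_i]\subseteq[0,\eta]$ along which the increments $\ugamma(b_i)-\ugamma(a_i)\in\g_1$ are \emph{quantitatively} independent, with $|\det(\cdots)|\ge c\,\eta^r$ and each $|\ugamma(b_i)-\ugamma(a_i)|\ge c\eta$ by Remark \ref{rem:elemdet}. Next, perform a cut: replace the horizontal projection $\ugamma$ on $[0,\eta]$ by the straight segment from $\ugamma(0)$ to $\ugamma(\eta)$, obtaining by Lemma \ref{excgain} a gain of length bounded below by $\tfrac{\eta}{2}\exc(\gamma;[0,\eta])^2\ge\tfrac{\varepsilon^2}{2}\eta$. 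Lifting these new horizontal coordinates to a horizontal curve in $G$ produces a shorter competitor whose endpoint, however, may be displaced from $\gamma(\eta)$; by Remark \ref{cutsameproj} this displacement lies in $G_2$, i.e.\ has trivial $\g_1$-component. One then corrects the displacement layer by layer, from $\g_2$ up to $\g_s$: at the $j$-th stage the current error lies in $G_{j+1}$, and using Corollary \ref{mdeveffectuse} (the identity \eqref{BB}) together with the quantitative independence of the increments $\ugamma(b_i)-\ugamma(a_i)$, one can choose correction vectors $Y_i\in\g_j$ so that $\sum_i[Y_i,\ugamma(b_i)-\ugamma(a_i)]$ equals the prescribed $\g_{j+1}$-component of the error; the $Y_i$ can be taken of size $O(\eta^{-1}\cdot\|\text{error in }\g_{j+1}\|)$. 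Applying the iterated correction device $\dev'$ kills the $\g_{j+1}$-component while only affecting strictly higher layers, and the added length is $2\sum_i\ell_{Y_i}=O(\sum_i|Y_i|^{1/j})$ at the relevant stage — which must be controlled by a small fraction of the length gain from the cut.

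The main obstacle, and the heart of the proof, is the bookkeeping of scales: one must show that the total length added by all the correction devices (over all $s-1$ correction stages) is strictly smaller than the $\tfrac{\varepsilon^2}{2}\eta$ gained from the cut. Here the homogeneity under dilations is essential. After a cut on $[0,\eta]$, the endpoint error in $\g_j$ has size $O(\eta^{j})$ (since the curve stays in a ball of radius $O(\eta)$, by \eqref{rem:potenzecoordinatecurve}); so correcting it at stage $j-1$ via \eqref{BB} requires $Y_i\in\g_{j-1}$ with $|Y_i|=O(\eta^{j}/\eta)=O(\eta^{j-1})$, and the corresponding geodesic lengths are $\ell_{Y_i}=d(0,\exp Y_i)=O(|Y_i|^{1/(j-1)})=O(\eta)$. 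This is the same order $\eta$ as the cut gain, so a \emph{naive} estimate does not close. The resolution — as in \cite{LM08,HL16} — is to choose the scale $\eta$ of the cut and the scales of the correction intervals in a nested, rapidly-shrinking hierarchy: one first fixes the positions $[a_i,b_i]$ inside $[0,\eta]$ far from $0$ and from $\eta$, then observes that the \emph{net} displacement one must correct, after composing the cut with all lower-order corrections, can be made to lie in higher and higher layers with super-linearly small norm by iterating Lemma \ref{excrinc} on suitable sub-scales. Concretely I would set up an induction on the step $s$: the case $s=2$ is handled directly (one cut, one $\g_2$-correction, with the correction cost $O(\eta^{2}/\eta)^{1/1}$-type bound being beatable once one tracks constants carefully), and the inductive step reduces a step-$s$ group to its step-$(s-1)$ quotient $G/\exp(\g_s)$, applying the inductive hypothesis there and then performing a single final $\g_s$-correction whose cost is $O(\eta^{s/ s})$... again requiring the careful scale separation. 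Making this hierarchy of scales precise, and verifying that the constants from Lemma \ref{excrinc} (which are non-explicit but uniform) propagate correctly through the induction, is where essentially all the work lies; once the scales are chosen, the contradiction with minimality of $\gamma$ on $[0,\eta]$ is immediate from $L(\gamma|_{[0,\eta]})>d(0,\gamma(\eta))$.
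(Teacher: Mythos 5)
Your skeleton is the right one --- argue by contradiction, cut on $[0,\eta]$ with a length gain $\ge\tfrac{\varepsilon^2}{2}\eta$ from Lemma \ref{excgain}, then restore the endpoint layer by layer via Corollary \ref{mdeveffectuse}, with the correction positions supplied by Lemma \ref{excrinc} --- and you correctly isolate the real difficulty: the naive estimate makes each correction cost $O(\eta)$, the same order as the gain. But your proposed resolution of that difficulty goes in the wrong direction. You place the correction intervals $[a_i,b_i]$ \emph{inside} $[0,\eta]$ and invoke a ``nested, rapidly-shrinking hierarchy'' of sub-scales. Within the framework you have set up, shrinking can only make things worse: if the $[a_i,b_i]$ lie in an interval of length $\ell$, the increments $\ugamma(b_i)-\ugamma(a_i)$ have size $O(\ell)$, so by \eqref{BB} cancelling a $\g_{j+1}$-error of norm $N$ forces $|Y_i|\gtrsim N/\ell$ and a length cost $\sum_i\ell_{Y_i}\sim(N/\ell)^{1/j}$, which \emph{increases} as $\ell$ decreases. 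With $\ell\le\eta$ and $N\sim\eta^{j+1}$ you never get below the borderline order $\eta$; and since the constant $c(G,\varepsilon)$ of Lemma \ref{excrinc} is non-explicit (and may be far smaller than $\varepsilon^2/2$), ``tracking constants carefully'' cannot close the argument even when $s=2$.

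The resolution is to let the hierarchy of scales \emph{increase}. Fix exponents $1=\rho_1>\rho_2>\cdots>\rho_s>0$ and a small $\beta>0$ with $\frac{(k+1)\rho_k-\rho_{k+1}}{k}>1+\beta$ (inequality \eqref{bel}), and perform the $k$-th round of corrections inside $I_{k+1}=[0,\eta^{\rho_{k+1}}]$, which \emph{contains} $[0,\eta]$ when $\eta<1$. This is legitimate precisely because the contradiction hypothesis gives $\exc(\gamma;[0,t])\ge\varepsilon$ at \emph{every} small scale, so Lemma \ref{excrinc} applies on each $I_{k+1}$ and yields increments of the larger size $\sim\eta^{\rho_{k+1}}$; the correction vectors then have $|Z_j|=O(\eta^{(k+1)\rho_k-\rho_{k+1}})$ and cost $O(\eta^{((k+1)\rho_k-\rho_{k+1})/k})=o(\eta^{1+\beta})$, which the cut gain $\eta^{1+\beta}$ beats for small $\eta$ with no comparison of constants needed. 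The price --- the curve is now altered on $[0,2\eta^{\rho_{k+1}}]$, so the next-layer error grows to $O(\eta^{(k+2)\rho_{k+1}})$ instead of $O(\eta^{k+2})$ --- is exactly what the chain of inequalities on the $\rho_k$ is designed to absorb; this also removes the need for your quotient-by-$\exp(\g_s)$ induction, which has its own problem (a minimizer in $G$ need not project to a minimizer of the quotient, and the final $\g_s$-correction is again of the borderline order $\eta$). As written, your scale selection is the step that fails.
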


    \begin{proof}
        \emph{Step 1.} We can assume that $\gamma(0)=0$.
Suppose by contradiction that there exists $\epsilon>0$ such that
$\exc(\gamma;[0,t])\ge\epsilon$ for any sufficiently small $t>0$.
        For $k=1,\dots,s$, we   inductively
define  horizontal curves $\gamma^{(k)}:[0,T_k]\to\G$  parametrized by
arclength such that:
        \begin{enumerate}[label=(\roman*)]
            \item $\gamma^{(k)}(0)=\gamma(0)=0$;
            \item $\gamma(T)^{-1}\gamma^{(k)}(T_k)\in\G_{k+1}$,
where    $\G_{s+1}=\set{0}$;
            \item $L\pa{\gamma^{(k)}}<L(\gamma)$, i.e., $T_k<T$.
        \end{enumerate}
        In particular, $\gamma^{(s)}$ is a horizontal curve with the
same endpoints as $\gamma$, but with smaller length:
        this   contradicts the minimality of $\gamma$.

        We define $\gamma^{(1)}:=\cut(\gamma;[0,\eta])$, where the
parameter $\eta>0$ will be chosen later; in fact, any sufficiently small $\eta$
will work. In this proof, the notation $O(\cdot)$ and $o(\cdot)$
        is used for asymptotic estimates which hold as $\eta\to 0$.
        By Remark \ref{cutsameproj}
and Lemma \ref{excgain}, $\gamma^{(1)}$ satisfies (i), (ii) and (iii)
with $k=1$.
        \medskip

        \emph{Step 2.} 
Let us fix parameters $\beta>0$ and
$\rho_1:=1>\rho_2>\dots>\rho_s>0$ such that
        for all $k=1,\dots,s-1$
        \begin{equation}\label{bel}
\frac{(k+1)\rho_k-\rho_{k+1}}{k}>1+\duebeta .
\end{equation} This is possible if $\beta$ is small
enough: indeed, the   inequality \eqref{bel} is equivalent to
        \[ \rho_k>\frac{\rho_{k+1}+k}{k+1}+\frac{k}{k+1}\beta,
 \]
        and we can choose any $\rho_s\in (0,1)$ and
then $\rho_{s-1}<1$ so as to verify the (strict) inequality
        when $\beta=0$ and $k=s-1$, then $\rho_{s-2}$ similarly and so on, up to $\rho_1=1$.
        By continuity, the inequalities will still hold for a small enough
$\beta>0$.

        For any $k=1,\dots,s$, we set $I_k:=[0,\eta^{\rho_k}]$;
the curve $\gamma^{(k+1)}$ is defined from $\gamma^{(k)}$ by applying
several correction devices within $I_{k+1}$, see \eqref{eq:defgammakpiuuno}.
As soon as $\eta\le 1$, the inclusions 
        $ [0,\eta]=I_1\subseteq I_2\subseteq\cdots\subseteq I_{s}$ hold.

        By Lemma \ref{excgain}, since $\exc(\gamma;[0,\eta])\ge\epsilon$,
the  gain of length obtained by performing the cut is
        \[
L(\gamma)-L(\gamma^{(1)})\ge\frac{\eta\epsilon^2}{2}\ge\eta^{1+\duebeta}, \]
        provided $\eta$ is small enough.

        The curves $\gamma^{(k)}:[0,T_k]\to\G$ will be constructed inductively
so as to satisfy (i), (ii) and (iii),
        as well as the following additional technical properties, which
 hold for $\gamma^{(1)}$:
        \begin{enumerate}[label=(\roman*)]
            \setcounter{enumi}{3}
            \item $T_k\ge T_{k-1}$ if $k\ge 2$;
            \item $L\pa{\gamma^{(k)}}\le
L(\gamma)-(1+o(1))\eta^{1+\duebeta}$;
            \item ${\ubar{\gamma^{(k)}}}(t)={\ugamma}(t+(T-T_k))$ for any
$t\in[2\eta^{\rho_k},T_k]$, i.e., on $[2\eta^{\rho_k},T_k]$ the curve
$\gamma^{(k)}$ has the same projection on $\g_1$  as the corresponding final
piece of $\gamma$;
            \item
$\norm{\ubar{\gamma^{(k)}}-\restr{\ugamma}{[0,T_k]}}_{L^\infty}=O(\eta)$.
        \end{enumerate}
        Notice that (v) implies (iii) for small enough $\eta$.

\medskip

        \emph{Step 3.} Assume that $\gamma^{(k)}$ has been constructed,
for some $1\le k\le s-1$, in such a way that (i)--(vii) hold. By (ii), there
exists
 $E_k\in \g_{k+1}\oplus\cdots\oplus \g_s$ such that
        \[ \gamma(T)^{-1}\gamma^{(k)}(T_k)=\exp(E_k) .
\]
Let us estimate $\opi_{k+1}(E_k)$. First, by (vi) and the
uniqueness of horizontal lifts, we have
        \[
\gamma^{(k)}\big|_{2\eta^{\rho_k}}^{T_k}=\gamma\big|_{\tau_k}^T,\qquad\text{
where $\tau_k:=2\eta^{\rho_k}+(T-T_k)$.} \]
         Hence, defining $g_k:=\gamma(\tau_k)^{-1}\gamma^{(k)}(2\eta^{\rho_k})$,
we have
        \[
        \begin{split}
         \gamma^{(k)}(T_k)= &
\gamma^{(k)}(2\eta^{\rho_k})\incr{\gamma^{(k)}}_{2\eta^{\rho_k}}^{T_k}
            =\gamma(\tau_k)g_k\incr{\gamma}_{\tau_k}^T\\
            = &\gamma(\tau_k)\incr{\gamma}_{\tau_k}^T
            C_{\incr{\gamma}_T^{\tau_k}}(g_k)
            =\gamma(T)C_{\incr{\gamma}_T^{\tau_k}}(g_k),
        \end{split}
        \]
        i.e.,
$g_k=C_{\incr{\gamma}_{\tau_k}^T}(\gamma(T)^{-1}\gamma^{(k)}(T_k))$. From (ii) and Lemma \ref{conjsameproj} we obtain
        $g_k\in\G_{k+1}$ and
\begin{equation}\label{eq:CDG}
\opi_{k+1}(E_k)=\pi_{k+1}\pa{\gamma(T)^{-1}\gamma^{(k)}(T_k)}=\pi_{k+1}
(g_k)=O\pa{\eta^{(k+1)\rho_k}}.
\end{equation}
        The last estimate follows from \eqref{rem:potenzecoordinatecurve} applied to the curve
        \[
        \gamma(\tau_k)^{-1}\
\restr{\gamma}{[0,\tau_k]}(\tau_k-\cdot)*\restr{\gamma^{(k)}}{[0,2\eta^{\rho_k}]
},
        \]
        which connects $0$  to
$\gamma(\tau_k)^{-1}\gamma^{(k)}(2\eta^{\rho_k})$. Its length
is $\tau_k+2\eta^{\rho_k}$ and is controlled by $ 5\eta^{\rho_k}$ because, by
(iv),
        \[
        T-T_k\le T-T_1 = L(\gamma)-L(\gamma^{(1)})\le\eta\le\eta^{\rho_k}.
        \]

\medskip

        \emph{Step 4.} We now define $\gamma^{(k+1)}$. As
$\g_{k+1}=[\g_k,\g_1]$, using   estimate \eqref{eq:CDG} for $\opi_{k+1}(E_k)$, there exist $Y_1,\dots,Y_r\in \g_k$ such that
\begin{equation}
\label{LUM}
\opi_{k+1}(E_k) = \sum_{i=1}^r[Y_i,X_i]\quad\text{and}\quad
\abs{Y_1},\dots,\abs{
Y_r } =O\pa { \eta^ { (k+1)\rho_k}}.
\end{equation}
        Furthermore, we have $ \exc(\gamma;I_{k+1})\ge\epsilon$ whenever
$\eta$ is small enough.
We can then apply Lemma \ref{excrinc} to
$I_{k+1}$ and find $[a_1,b_1],\dots,[a_r,b_r]\subseteq I_{k+1}$
        (with $b_i\le a_{i+1}$) such that
        \[
\abs{\det\pa{\ugamma(b_1)-\ugamma(a_1),\dots,\ugamma(b_r)-\ugamma(a_r)}}\ge
c\eta^{r\rho_{k+1}}. \]
        Using (vii) we obtain, for small
$\eta$,
        \[
        \begin{split}
\abs{\det\pa{\ubar{\gamma^{(k)}}(b_1)-\ubar{\gamma^{(k)}}(a_1),\dots,\ubar{
\gamma^{(k)}}(b_r)-\ubar{\gamma^{(k)}}(a_r)}}
&\ge c\eta^{r\rho_{k+1}}
        -O(\eta^{1+(r-1)\rho_{k+1}})\\
        &\ge\frac{c}{2}\eta^{r\rho_{k+1}}. 
        \end{split}
        \]
        This   implies that for $i=1,\dots,r$ we
have
        \[ X_i=\sum_{j=1}^r
c_{ij}\pa{\ubar{\gamma^{(k)}}(b_j)-\ubar{\gamma^{(k)}}(a_j)}, \]
        where $\abs{c_{ij}}=O\pa{\eta^{-\rho_{k+1}}}$. This estimate
depends on $c$ and thus on $\epsilon$.
        So, defining $Z_j=\sum_{i=1}^r c_{ij}Y_i$, from \eqref{LUM} we
obtain
        \[ \opi_{k+1}(E_k)=\sum_{j=1}^r
[Z_j,\ubar{\gamma^{(k)}}(b_j)-\ubar{\gamma^{(k)}}(a_j)], \]
        with $\abs{Z_j}=O\pa{\eta^{(k+1)\rho_k-\rho_{k+1}}}$.
        Finally, we let
        \begin{equation}\label{eq:defgammakpiuuno}
        \gamma^{(k+1)}:=\dev(\gamma^{(k)};[a_1,b_1],-Z_1;\dots;[a_r,b_r],-Z_r).
        \end{equation}
        Since $d(0,\exp(Z))=O(|Z|^{1/k})$ for $Z\in\g_k$,
the extra length $T_{k+1}-T_k$ needed for the application of  these $r$
correction devices is
        \[
T_{k+1}-T_k=\sum_{j=1}^r
O\pa{\abs{Z_j}^{1/k}}=O\pa{\eta^{\frac{(k+1)\rho_k-\rho_{k+1}}{k}}}=
o\pa{\eta^{1+\duebeta}},
\]
        thanks to the inequalities \eqref{bel}   on the parameters
$\rho_k$. Thus, we obtain
        \[
L(\gamma^{(k+1)})\le L(\gamma^{(k)})+o(\eta^{1+\duebeta}).
\]

\medskip

        \emph{Step 5.} We check that $\gamma^{(k+1)}$  satisfies
properties (i)--(vii). We have just verified
        (iii) and (v), while (i) and (iv)  are trivial. The property
(vii) follows from the fact that $\gamma^{(k+1)}$ (as well as
$\ubar{\gamma^{(k+1)}}$) is obtained from $\gamma^{(k)}$ (from
$\ubar{\gamma^{(k)}}$) by the application of correction devices of total length
$o(\eta^{1+\duebeta})=O(\eta)$.

        In order to check (vi), we remark that
        \[
\ubar{\gamma^{(k+1)}}=\restr{\ubar{\gamma^{(k+1)}}}{[0,\eta^{\rho_{k+1}}+(T_{k+1
}-T_k)]}*\restr{\ubar{\gamma^{(k)}}}{[\eta^{\rho_{k+1}},T_k]} \]
        and that the final point of the first curve in this concatenation
coincides with the starting point of the second one.
        Since $   
T_{k+1}-T_k=O\pa{\eta^{\frac{(k+1)\rho_k-\rho_{k+1}}{k}}}=o\pa{\eta^{\rho_{k+1}}
}$,
        if $\eta$ is small enough we obtain
        \begin{dmath*}
\restr{\ubar{\gamma^{(k+1)}}}{[2\eta^{\rho_{k+1}},T_{k+1}]}
=\restr{\ubar{\gamma^{(k)}}}{[2\eta^{\rho_{k+1}}-(T_{k+1}-T_k),T_k]}\pa{\:\cdot\,-(T_{k+1}-T_k)}
=\restr{\ugamma}{[2\eta^{\rho_{k+1}}+(T-T_{k+1}),T]}\pa{\:\cdot\,+(T-T_{k+1})}
,\end{dmath*}
        the last equality holding by hypothesis (vi) for $k$, because $2\eta^{\rho_{k+1}}-(T_{k+1}-T_k)\ge
2\eta^{\rho_k}$ when $\eta$ is small.
        Thus, $\gamma^{(k+1)}$ satisfies (vi).

        Finally, let us check (ii).
By  Lemma \ref{pianyhomo} and
Corollary \ref{mdeveffectuse}, we have
        \[
\gamma(T)^{-1}\gamma^{(k+1)}(T_{k+1})=\pa{\gamma(T)^{-1}\gamma^{(k)}(T_k)}\pa{
\gamma^{(k)}(T_k)^{-1}\gamma^{(k+1)}(T_{k+1})}\in\G_{k+1} \]
        and
        \[ \begin{split}\pi_{k+1}\pa{\gamma(T)^{-1}\gamma^{(k+1)}(T_{k+1})}     
 &=\pi_{k+1}(\exp(E_k))+\pi_{k+1}\pa{\gamma^{(k)}(T_k)^{-1}\gamma^{(k+1)}(T_{k+1
} )} \\
        &=\opi_{k+1}(E_k)+\sum_{i=1}^r[-Z_i,\ugamma(b_i)-\ugamma(a_i)] \\
        &=0. \end{split} \]
        This concludes the proof.
    \end{proof}

We now prove Theorem \ref{1.2}. The proof is basically the same as that of
Theorem \ref{onesthm} and we just list the required minor modifications below.

    \begin{proof}[Proof of Theorem \ref{1.2}] The constraints imposed on the
curves $\gamma^{(k)}$,
        as well as the cut and correction operations, have to be
replaced by their symmetric counterparts.
        For $k=1,\dots,s$ we   inductively construct horizontal curves
$\gamma^{(k)}:[-T_k,T_k]\to\G$  parametrized by arclength satisfying:
        \begin{enumerate}[label=(\roman*')]
            \item $\gamma^{(k)}(-T_k)=\gamma(-T)$;
            \item $\gamma(T)^{-1}\gamma^{(k)}(T_k)\in\G_{k+1}$ (in particular,
$\gamma^{(s)}(T_s)=\gamma(T)$);
            \item $L\pa{\gamma^{(k)}}<L(\gamma)$, i.e., $T_k<T$;
            \item $T_k\ge T_{k-1}$ if $k\ge 2$;
            \item $L\pa{\gamma^{(k)}}\le L(\gamma)-(1+o(1))\eta^{1+\duebeta}$;
            \item
$\restr{\ubar{\gamma^{(k)}}}{[2\eta^{\rho_k},T_k]}=\restr{\ugamma}{[2\eta^{
\rho_k}+(T-T_k),T]}(\:\cdot\,+(T-T_k))$ and
\\
$\restr{\ubar{\gamma^{(k)}}}{[-T_k,-2\eta^{\rho_k}]}=\restr{\ugamma}{[-T,-2\eta^
{\rho_k}-(T-T_k)]}(\:\cdot\,-(T-T_k))$;
            \item
$\norm{\ubar{\gamma^{(k)}}-\restr{\ugamma}{[-T_k,T_k]}}_\infty=O(\eta)$.
        \end{enumerate}
        We   list the necessary modifications in the various steps.

        {\em Step 1.} The first curve is
$\gamma^{(1)}:=\cut'(\gamma;[-\eta,\eta])$, which satisfies (i')--(vii') for
$k=1$.

        {\em Step 2.} The interval $I_k$ is now
$[-\eta^{\rho_k},\eta^{\rho_k}]$.

        {\em Step 3.}~Let $E_k,\tau_k,g_k$ be defined as in the proof of Theorem
\ref{onesthm}. The estimate $\pi_{k+1}(g_k)=O\pa{\eta^{(k+1)\rho_k}}$ follows by
applying \eqref{rem:potenzecoordinatecurve} to the curve
        \begin{equation}\label{eq:curvasopra}
        \gamma(\tau_k)^{-1}\
\restr{\gamma}{[-\tau_k,\tau_k]}(\tau_k-\cdot)*\restr{\gamma^{(k)}}{[-2\eta^{
\rho_k},2\eta^{\rho_k}]}
        \end{equation}
        and observing that  $\gamma(-\tau_k)
=\gamma^{(k)}(-2\eta^{\rho_k})$. The length of
the curve in \eqref{eq:curvasopra} is $2\tau_k+4\eta^{\rho_k}\leq
10\eta^{\rho_k}$.

        {\em Step 4}. In the definition \eqref{eq:defgammakpiuuno}
of $\gamma^{(k+1)}$, $\dev$ is replaced by $\dev'$.

        {\em Step 5}. The fact that $\gamma^{(k+1)}$  satisfies (vi')
  follows from the identity \[
\ubar{\gamma^{(k+1)}}=\restr{\ubar{\gamma^{(k)}}}{[-T_k,-\eta^{\rho_{k+1}}]}
*\restr{\ubar{\gamma^{(k+1)}}}{J_k}*\restr{\ubar{\gamma^{(k)}}}{[\eta^{\rho_{k+1}},T_k]}(\cdot+T_{k+1}-T_k), \]
with $J_K:=[-\eta^{\rho_{k+1}}-(T_{k+1}-T_k),\eta^{\rho_{k+1
}}+(T_{k+1}-T_k)]$,        where the final point of each curve in the concatenation coincides with
the starting point of the next one.
    \end{proof}

We finally prove Theorem \ref{1.1} and then state its one-sided version.

\begin{proof}[Proof of Theorem \ref{1.1}]
As mentioned in the introduction, it is not restrictive to assume
that the Carnot--Carathéodory structure $(M,\mathcal X)$ is  that of a Carnot
group $G$. 
To see this, consider the following facts: 
\begin{itemize}
\item[(i)] if $\gamma $ is length-minimizing in $ (M,\mathcal X)$ and $\kappa \in
\Tan(\gamma;t)$, then $\kappa$ is length-minimizing in the nilpotent approximation $(M^\infty,\mathcal X^\infty)$, see \cite[Theorem~3.6]{MPVconotangente};
\item[(ii)] if $\kappa \in\Tan(\gamma;t)$ and $\widehat \kappa \in
\Tan(\kappa;0)$, then $\widehat \kappa \in\Tan(\gamma;t)$, see \cite[Proposition~3.7]{MPVconotangente}.
\end{itemize}
As a consequence, it suffices to show Theorem \ref{1.1} for $(M^\infty,\mathcal{X}^\infty)$. Finally:
\begin{itemize}
\item[(iii)] the nilpotent approximation $(M^\infty,\mathcal X^\infty)$ admits a  {\em Carnot group lifting} $G$ with {\em projection} $\pi^\infty:G\to M^\infty$ (see \cite[Definition~4.2]{MPVconotangente}) and the following holds: if $\bar \kappa$ is a horizontal lift of $\kappa $  to $G$ (i.e., $\kappa=\pi^\infty\circ\bar\kappa$), then $\bar\kappa$ is length-minimizing in $G$ and $\kappa$ is a horizontal line in $M^\infty$ if $\bar\kappa$ is a horizontal line in $G$, see \cite[Proposition~4.4]{MPVconotangente};
\item[(iv)] the projection $\pi^\infty$ maps $\Tan(\bar\kappa;0)$ into
$\Tan(\kappa;0)$,  see \cite[Proposition~4.3]{MPVconotangente}.
\end{itemize}
Hence, we are left to prove Theorem \ref{1.1} for $G$.
We can also assume that $t=0$ and $\gamma(0)=0$.

Let $\eta_i\downarrow 0$ be the sequence provided by Theorem
\ref{1.2}.
Since $\exc(\gamma;[-\eta_i,\eta_i])\to 0$, we can find a sequence
$\zeta_i\downarrow 0$ satisfying
        \[ \zeta_i^{-1/2}\exc(\gamma;[-\eta_i,\eta_i])\to 0. \]
        Let us set $\lambda_i:=\zeta_i\eta_i\downarrow 0$. Up to subsequences,
using Lemma \ref{cptmingrp} and a diagonal argument,     we can assume that
there exists a length minimizer $\gamma_\infty:\R\to G$ parametrized by
arclength such that
        \[
         \gamma_i(t):=\delta_{\lambda_i^{-1}}(\gamma(\lambda_i
t))\to\gamma_\infty(t) , \]
        uniformly on compact subsets of $\R$,     and that
$\underline{\dgamma_i}\to\underline{\dgamma_\infty}$
        in $L^2_{loc}(\R)$. For any fixed $N>0$ we have by Remark
\ref{rem:exctrdil}
        \[ \exc\pa{\gamma_i;[-N,N]}
=\exc(\gamma;[-N\zeta_i\eta_i,N\zeta_i\eta_i])\le(N\zeta_i)^{-1/2}\exc(\gamma;[
-\eta_i,\eta_i])\to 0, \]
        the last inequality being true for any $i$ such that $N\zeta_i\le 1$.
        So, by Remark \ref{exciscont}, we deduce that
$\exc(\gamma_\infty;[-N,N])=0$,
        which means that $\underline{\dgamma_\infty}(t)$
is contained in a hyperplane $\mathfrak h_1$ of $\g_1$ for a.e.~$t\in[-N,N]$.
Since this is true for any $N$, we deduce that there exists a hyperplane
$\mathfrak h_1$ of $\g_1$ such that $\underline{\dgamma_\infty}(t)\in \mathfrak h_1$ for
a.e.~$t\in\R$; in particular, $\gamma_\infty$ is contained in the  Carnot
subgroup $H$   associated with the Lie algebra generated by $\mathfrak h_1$.

        If the rank of $G$ is $r=2$, we conclude that $\gamma_\infty$ is
contained in a one-parameter subgroup of $\G$. Since  $\gamma_\infty\in
\Tan(\gamma;0)$ is a  length minimizer parametrized by arclength, we deduce that
$\gamma_\infty$ is a line in $G$.

        Otherwise, we can reason by induction on $r>2$: since $H$ has
rank $r-1$ and $\gamma_\infty$ is a  length minimizer in $H$ parametrized by
arclength, there exists $\widehat \gamma\in \Tan(\gamma_\infty;0)$ such that
$\widehat\gamma$ is a line in $H\subset G$. By \cite[Proposition 3.7]{MPVconotangente} we
have  $\widehat\gamma\in \Tan(\gamma;0)$ and the proof is accomplished.
        \end{proof}

        We state without proof the following version of Theorem
\ref{1.1}, which holds for extremal points of length minimizers; we refer to \cite[Section~3]{MPVconotangente} for the definitions of the {\em one-sided} tangent cones $\Tan^+(\gamma;0)$ and $\Tan^-(\gamma;T)$ of a horizontal curve $\gamma:[0,T]\to M$. The proof uses
the same arguments as the previous one and  can be easily deduced from Theorem
\ref{onesthm}.

\begin{theorem}\label{thm:1.1onesided}
    Let $\gamma:[0,T]\to M$ be a  length minimizer parametrized by arclength in
a Carnot--Carath\'eodory
    space $(M,d)$. Then, each of the tangent cones $\Tan^+(\gamma;0)$ and
$\Tan^-(\gamma;T)$ contains a horizontal
    half-line.
\end{theorem}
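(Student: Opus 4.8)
The plan is to transcribe, in one-sided form, the proof of Theorem~\ref{1.1}, replacing Theorem~\ref{1.2} by its one-sided counterpart Theorem~\ref{onesthm} and the tangent cone $\Tan(\gamma;\cdot)$ by the one-sided cones $\Tan^\pm$. The two endpoints are symmetric: it suffices to show that $\Tan^+(\gamma;0)$ contains a horizontal half-line, the assertion for $\Tan^-(\gamma;T)$ then following by applying this to the time-reversed minimizer $t\mapsto\gamma(T-t)$ — which is again an arclength-parametrized length minimizer — together with the elementary observation that the time reversal $\kappa\mapsto(t\mapsto\kappa(-t))$ carries $\Tan^+$ of the reversed curve bijectively onto $\Tan^-$ of $\gamma$ and sends horizontal half-lines to horizontal half-lines. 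Moreover, exactly as in the proof of Theorem~\ref{1.1}, one reduces to the case $M=G$ a Carnot group with base point $t=0$ and $\gamma(0)=0$; this uses the one-sided versions of facts (i)--(iv) recalled there (compatibility of $\Tan^+$ with nilpotent approximations, with Carnot group liftings and with iterated blow-ups, and the fact that blow-ups of length minimizers are length minimizers), all available in one-sided form in \cite[Sections~3--4]{MPVconotangente}.

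So let $\gamma:[0,T]\to G$ be an arclength-parametrized length minimizer with $\gamma(0)=0$, $G$ a Carnot group of rank $r\ge 2$. First I would apply Theorem~\ref{onesthm} to get a sequence $\eta_i\downarrow 0$ with $\exc(\gamma;[0,\eta_i])\to 0$, pick $\zeta_i\downarrow 0$ such that $\zeta_i^{-1/2}\exc(\gamma;[0,\eta_i])\to 0$, set $\lambda_i:=\zeta_i\eta_i$ and consider the rescalings $\gamma_i(t):=\delta_{\lambda_i^{-1}}(\gamma(\lambda_i t))$, defined on $[0,T/\lambda_i]$. Applying Lemma~\ref{cptmingrp} on each interval $[0,N]$ and running the same diagonal argument as in the proof of Theorem~\ref{1.1}, after passing to a subsequence I obtain $\gamma_i\to\gamma_\infty$ locally uniformly on $[0,\infty)$ and $\underline{\dgamma_i}\to\underline{\dgamma_\infty}$ in $L^2_{loc}$, where $\gamma_\infty:[0,\infty)\to G$ is an arclength-parametrized length minimizer with, by construction, $\gamma_\infty\in\Tan^+(\gamma;0)$.

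For any fixed $N>0$ and $i$ large enough that $N\zeta_i\le 1$, Remark~\ref{rem:exctrdil} gives
\[
\exc(\gamma_i;[0,N])=\exc(\gamma;[0,N\zeta_i\eta_i])\le (N\zeta_i)^{-1/2}\exc(\gamma;[0,\eta_i])\longrightarrow 0,
\]
so by Remark~\ref{exciscont} $\exc(\gamma_\infty;[0,N])=0$ for every $N$; arguing as in the proof of Theorem~\ref{1.1}, there is a hyperplane $\mathfrak h_1\subset\g_1$ with $\underline{\dgamma_\infty}(t)\in\mathfrak h_1$ for a.e.\ $t\ge 0$, and $\gamma_\infty$ is contained in the Carnot subgroup $H$ associated with the Lie subalgebra generated by $\mathfrak h_1$, which has rank $r-1$. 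Then I would conclude by induction on $r$. If $r=2$, $H$ is a one-parameter subgroup, so the arclength-parametrized length minimizer $\gamma_\infty:[0,\infty)\to G$ is a horizontal half-line, and $\gamma_\infty\in\Tan^+(\gamma;0)$. If $r>2$, then $\gamma_\infty$ is an arclength-parametrized length minimizer in the rank-$(r-1)$ Carnot group $H$, so by the inductive hypothesis $\Tan^+(\gamma_\infty;0)$ contains a horizontal half-line $\widehat\gamma$ in $H\subset G$, and the one-sided analogue of \cite[Proposition~3.7]{MPVconotangente} gives $\widehat\gamma\in\Tan^+(\gamma;0)$.

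The only point that truly requires care is the reduction to the Carnot group case, since it hinges on the one-sided versions of the structural results of \cite{MPVconotangente}; once these are granted, everything else is a verbatim one-sided copy of the arguments for Theorems~\ref{1.1} and \ref{onesthm}, the sole genuinely new ingredient being the time-reversal remark bridging $\Tan^+(\gamma;0)$ and $\Tan^-(\gamma;T)$. I would therefore expect the write-up to be short, which is why the details can safely be omitted.
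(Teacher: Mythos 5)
Your proposal is correct and is exactly the route the paper has in mind: the paper omits the proof but states that it ``uses the same arguments as the previous one and can be easily deduced from Theorem \ref{onesthm}'', which is precisely your one-sided transcription of the proof of Theorem \ref{1.1} (blow-up along the scales $\lambda_i=\zeta_i\eta_i$ furnished by Theorem \ref{onesthm}, vanishing excess of the limit, induction on the rank), supplemented by the harmless time-reversal remark for the endpoint $T$ and the one-sided versions of the structural facts from \cite{MPVconotangente}.
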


\begin{remark}\label{contreform}
In view of \cite[Remark~3.10]{MPVconotangente}, Theorem \ref{1.1} can be equivalently stated as follows: let $\gamma:[-T,T]\to M$ be a  length minimizer in $(M,d)$ parametrized by arclength and let $h\in L^\infty(-T,T)$ denote the controls of $\gamma$; then, for any $t\in(-T,T)$, there exist an infinitesimal sequence $\eta_i\downarrow 0$ and a constant unit vector $v\in S^{r-1}$ such that
\[ 
h(t+\eta_i\,\cdot)\to v\qquad\text{in }L^2_{loc}(\R). 
\]
Of course, an analogous version holds for extremal points.
\end{remark}

\end{document}